\font\smallit=cmti10
\renewcommand\section{\@startsection {section}{1}{\z@}
{-30pt \@plus -1ex \@minus -.2ex}
{2.3ex \@plus.2ex}
{\normalfont\normalsize\bfseries\boldmath}}
\renewcommand\subsection{\@startsection{subsection}{2}{\z@}
{-3.25ex\@plus -1ex \@minus -.2ex}
{1.5ex \@plus .2ex}
{\normalfont\normalsize\bfseries\boldmath}}
\renewcommand{\@seccntformat}[1]{\csname the#1\endcsname. }
\theoremstyle{definition}
\newtheorem{thm}{Theorem}[section]
\newtheorem{exa}[thm]{Example}
\newtheorem{lem}[thm]{Lemma}
\newtheorem{cor}[thm]{Corollary}
\newtheorem{prop}[thm]{Proposition}
\newtheorem{conj}[thm]{Conjecture}
\newtheorem{prob}[thm]{Problem}
\newtheorem{rem}[thm]{Remark}
\newcommand{\Z}{\mathbb{Z}}
\newcommand{\abegroup}{\mathbb{Z}_{n_1}\oplus \cdots \oplus \mathbb{Z}_{n_r}} 
\newcommand{\pgroup}[1]{\mathbb{Z}_{p^{\alpha_1}} \oplus  \cdots \oplus \mathbb{Z}_{p^{\alpha_{#1}}}} 
\newcommand{\seq}[1]{({#1})}
\newcommand{\s}[2]{S({#1},{#2})} 
\newcommand{\Dav}[1]{\mathsf{D}(#1)} 
\newcommand{\D}[3]{\mathsf{D}({#3},{#2})} 
\newcommand{\EGZ}[3]{\mathsf{EGZ}({#1}, {#2}, {#3})} 
\newcommand{\GG}{C}
\newcommand{\elem}[2]{e_{#1}({#2})} 
\newcommand{\Elem}[2]{e_{#1}(x_1, \ldots , x_{#2})} 
\newcommand{\Pow}[2]{p_{#1}(x_1, \ldots, x_{#2})}  
\newcommand{\num}[2]{v_{#1}({#2})} 
\newcommand{\Low}[2]{L({#1},{#2})} 
\newcommand{\dom}[1]{t({#1})} 
\newcommand{\zz}{\mathbf{z}}
\def\imod#1{\allowbreak\mkern10mu({\operator@font mod}\,\,#1)}
\begin{document}

\begin{center}
\uppercase{\bf Higher Degree Erd\H{o}s-Ginzburg-Ziv Constants}
\vskip 20pt
{\bf Yair Caro}\\
{\smallit Department of Mathematics, University of Haifa-Oranim, Israel}\\
{\tt yacaro@kvgeva.org.il}\\ 
\vskip 10pt
{\bf John R. Schmitt}\\
{\smallit Department of Mathematics, Middlebury College, Middlebury, Vermont, USA}\\
{\tt jschmitt@middlebury.edu}

\end{center}
\vskip 20pt

\vskip 30pt

\centerline{\bf Abstract}

\noindent
We generalize the notion of Erd\H{o}s-Ginzburg-Ziv constants -- along the same lines we generalized in earlier work the notion of Davenport constants --  to a ``higher degree" and obtain various lower and upper bounds.  These bounds are sometimes exact as is the case for certain finite commutative rings of prime power cardinality.  We also consider to what extent a theorem due independently to W.D.~Gao and the first author that relates these two parameters extends to this higher degree setting.  Two simple examples that capture the essence of these higher degree Erd\H{o}s-Ginzburg-Ziv constants are the following. 1) Let $\nu_p(m)$ denote the $p-$adic valuation of the integer $m$.  Suppose we have integers $t | {m \choose 2}$ and $n=t+2^{\nu_2(m)}$, then every sequence $S$ over $\Z_2$ of length $|S| \geq n$ contains a subsequence $S'$ of length $t$ for which  $\sum_{a_{i_1},\ldots, a_{i_m} \in S'} a_{i_1}\cdots a_{i_m} \equiv 0 \pmod{2}$, and this is sharp. 2) Suppose $k=3^{\alpha}$ for some integer $\alpha \geq 2$.  Then every sequence $S$ over $\Z_3$ of length $|S| \geq k+6$ contains a subsequence $S'$ of length $k$ for which $\sum_{a_h, a_i, a_j \in S'} a_ha_ia_j \equiv 0 \pmod{3}$.  These examples illustrate that if a sequence of elements from a finite commutative ring is long enough, certain symmetric expressions (symmetric polynomials) have to vanish on the elements of a subsequence of prescribed length.  The Erd\H{o}s-Ginzburg-Ziv Theorem is just the case where a sequence of length $2n – 1$ over $\Z_n$ contains a subsequence $S'=(a_1, \ldots, a_n)$ of length $n$ that vanishes when substituted in the linear symmetric polynomial $a_1+\cdots+a_n.$

\pagestyle{myheadings}
\thispagestyle{empty}
\baselineskip=12.875pt
\vskip 30pt

\section{Introduction}\label{sec:introduction}

Throughout this paper, let $p$ denote a prime number and $q=p^{\alpha}$ a prime power.

Let $G$ be a finite abelian group with $exp(G)$ its exponent.  Then for $g_i \in G$ $$S=\seq{g_1, \ldots, g_{\ell}}=\prod_{g \in G}g^{\num{g}{S}}$$  is called a {\it sequence over $G$}, where order is disregarded, repetition is allowed and the exponent $\num{g}{S}$ indicates the number of repetitions of the element $g$ in $S$.  Its {\it length}, denoted $|S|$, is the number of elements counted with multiplicity, i.e. $|S|=\sum_{g \in G}v_g(S)$.  A sequence of $G$ is said to be {\it zero-sum} if the sum of its elements is zero in $G$.  A sequence $S$ of $G$ is said to be {\it zero-sum free} if every non-trivial subsequence of $S$ has sum different to zero.  

For a group $G$, the {\it Davenport constant of $G$}, which we denote by $\Dav{G}$, is the smallest positive integer $z$ such that every sequence $S$ over $G$ of length $|S| \geq z$ contains a non-empty zero-sum subsequence, that is, $S$ is not zero-sum free.  For a group $G$, the {\it Erd\H{o}s-Ginzburg-Ziv constant of $G$} is the smallest positive integer $z$ such that every sequence of length $|S|\geq z$ contains a zero-sum subsequence of length $|G|$.

These two constants have been well-studied; see, for instance, the survey paper of W.D.~Gao and A.~Geroldinger \cite{GaoGeroldinger}.  We recall some of the earlier statements as follows.  

Recall that by the Fundamental Theorem of Finite Abelian Groups that for any finite non-trivial abelian group $G$ there exist integers $n_1, \ldots, n_r$ with $1 <n_1|\ldots|n_r$ so that $G$ can be written uniquely as

$$G \cong \abegroup,$$

\noindent where the integer $r$ is called the {\em rank} of $G$ and denoted $r(G)$.  We use $\mathsf{d}^*(G)$ to denote the value $\sum_{i=1}^r(n_i-1)$.

The value of $\Dav{G}$ was determined independently by J.E.~Olson \cite{Olson69a} and D. Kruyswijk \cite{vanEmdeBoasKruyswijk67} when $G$ is a $p$-group, and by J.E.~Olson \cite{Olson69b} when $G$ has rank at most $2$.

\begin{thm}[J.E.~Olson \cite{Olson69a}, \cite{Olson69b}, and D.~Kruyswijk \cite{vanEmdeBoasKruyswijk67}]\label{thm:Olson}
If $G$ is a $p$-group or $r(G) \leq 2$, then $\Dav{G} = 1+\mathsf{d}^*(G)$.
\end{thm}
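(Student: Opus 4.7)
The plan is to establish matching lower and upper bounds. The lower bound $\Dav{G} \geq 1 + \mathsf{d}^*(G)$ holds for every finite abelian group. Writing $G \cong \abegroup$ with standard generators $e_1, \ldots, e_r$, the sequence consisting of $n_i - 1$ copies of $e_i$ for each $i$ has length $\mathsf{d}^*(G)$, and every non-empty subsequence sums to $\sum_i c_i e_i$ with $0 \leq c_i \leq n_i - 1$ and not all $c_i = 0$, which is non-zero in $G$. Hence no non-empty zero-sum subsequence exists, so $\Dav{G} > \mathsf{d}^*(G)$.

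For the upper bound when $G \cong \pgroup{r}$ is a $p$-group, I would use Kruyswijk's group-ring approach. Consider $R = \mathbb{F}_p[G]$, which is a local ring whose unique maximal ideal is the augmentation ideal $I = \langle [g] - 1 : g \in G \rangle$. The tensor decomposition $R \cong \bigotimes_{i=1}^{r}\mathbb{F}_p[\mathbb{Z}_{p^{\alpha_i}}]$ together with $([g_i]-1)^{p^{\alpha_i}} = [g_i]^{p^{\alpha_i}} - 1 = 0$ in characteristic $p$ (for $g_i$ a generator of the $i$-th cyclic factor) yields a presentation $R \cong \mathbb{F}_p[z_1,\ldots,z_r]/(z_1^{p^{\alpha_1}},\ldots,z_r^{p^{\alpha_r}})$ with $z_i = [g_i]-1$. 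In this truncated polynomial ring the maximum total degree of a non-zero monomial equals $\sum_i(p^{\alpha_i}-1) = \mathsf{d}^*(G)$, so the nilpotency index of $I$ is exactly $1 + \mathsf{d}^*(G)$. Now, for any sequence $S = \seq{g_1, \ldots, g_\ell}$ with $\ell = 1 + \mathsf{d}^*(G)$, every $[g_j] - 1 \in I$, so
\[
0 \;=\; \prod_{j=1}^{\ell}\bigl([g_j] - 1\bigr) \;=\; \sum_{T \subseteq \{1,\ldots,\ell\}}(-1)^{\ell - |T|}\Bigl[\sum_{j \in T} g_j\Bigr].
\]
Since $\{[g] : g \in G\}$ is an $\mathbb{F}_p$-basis of $R$, comparing the coefficient of $[0]$ gives $\sum_{T : \sum_{j \in T} g_j = 0}(-1)^{\ell - |T|} \equiv 0 \imod{p}$; the empty $T$ contributes $(-1)^\ell \not\equiv 0$, forcing a non-empty zero-sum subsequence.

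For the rank-$\leq 2$ case, the rank-$1$ subcase $G = \Z_n$ is immediate: the $n+1$ partial sums $0, g_1, g_1+g_2, \ldots, g_1+\cdots+g_n$ of any length-$n$ sequence cannot all be distinct in $\Z_n$, and a coincidence $s_i = s_j$ with $i < j$ exhibits a zero-sum interval. The rank-$2$ subcase $G = \Z_{n_1} \oplus \Z_{n_2}$ with $n_1 \mid n_2$ is substantially more delicate; I would follow Olson's original inductive argument, which projects onto a cyclic quotient and iteratively applies the Erd\H{o}s--Ginzburg--Ziv theorem in the quotient to extract zero-sum blocks whose lifts accumulate in the complementary cyclic subgroup. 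The main obstacle is that fixed-length block extractions alone do not yield enough lifts to force a zero-sum in the subgroup, so Olson's argument must track subsequences of various lengths divisible by $n_1$ and exploit the divisibility relation $n_1 \mid n_2$ through a two-level combinatorial count; this delicate step does not extend to rank $\geq 3$, which explains why the theorem is restricted to these two regimes.
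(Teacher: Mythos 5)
First, a framing point: the paper states Theorem \ref{thm:Olson} as a classical result with citations and supplies no proof of its own, so there is no in-paper argument to measure yours against; I can only assess the proposal on its merits. Your lower bound is correct, the rank-one upper bound via the $n+1$ partial sums is correct, and your $p$-group argument is the standard and complete van Emde Boas--Kruyswijk group-ring proof: the identification $\mathbb{F}_p[G]\cong\mathbb{F}_p[z_1,\ldots,z_r]/(z_1^{p^{\alpha_1}},\ldots,z_r^{p^{\alpha_r}})$ with $z_i=[g_i]-1$, the consequent nilpotency index $1+\mathsf{d}^*(G)$ of the augmentation ideal, and the extraction of a non-empty zero-sum subsequence by reading off the coefficient of $[0]$ in the vanishing product $\prod_j([g_j]-1)$ are all sound.

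The genuine gap is the rank-two case for groups that are not $p$-groups. What you write there is a description of a strategy rather than a proof, and the strategy as described does not close: projecting $G=\mathbb{Z}_{n_1}\oplus\mathbb{Z}_{n_2}$ onto a cyclic quotient, repeatedly extracting zero-sum blocks there, and then applying the Davenport bound in the kernel to the block-sums requires on the order of $\mathsf{D}(H)$ disjoint blocks, hence a sequence of length roughly $n_1n_2$ --- far above the claimed $n_1+n_2-1$. You acknowledge this obstacle yourself, but acknowledging it is not the same as overcoming it. Olson's actual proof in the second cited paper first settles $\mathsf{D}(\mathbb{Z}_p\oplus\mathbb{Z}_p)=2p-1$ (which does follow from your $p$-group case) and then runs a separate, delicate transfer argument tailored to $\mathbb{Z}_m\oplus\mathbb{Z}_{mn}$ that exploits the abundance of \emph{short} zero-sum subsequences in the quotient, not merely fixed-length Erd\H{o}s--Ginzburg--Ziv blocks. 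None of that machinery appears in your write-up, so as it stands you have proved the theorem only for $p$-groups and for rank one; the remaining rank-two half rests on an appeal to authority.
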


P.~Erd\H{o}s, A.~Ginzburg, A.~Ziv \cite{ErdosGinzburgZiv61} showed that for the cyclic group $\Z_k$ the smallest positive integer $z$ such that every sequence of length $|S|\geq z$ contains a zero-sum subsequence of length $|\Z_k|=k$ is $2k-1$.

One particular exciting result that connects these two constants is due independently to W.D.~Gao \cite{Gao96} and Y. Caro \cite{Caro95},\cite{Caro96}.

\begin{thm}[Caro and Gao's $n+\mathsf{D}-1$ Theorem \cite{Gao96}, \cite{Caro95}, \cite{Caro96}]\label{thm:Gao96}
Let $G$ be a finite abelian group of order $n$.  The Erd\H{o}s-Ginzburg-Ziv constant of $G$ equals $n+\Dav{G}-1$.
\end{thm}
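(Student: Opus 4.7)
The plan is to prove both bounds. For the lower bound, that the Erd\H{o}s-Ginzburg-Ziv constant of $G$ is at least $n + \Dav{G} - 1$, I would construct a sequence of length $n + \Dav{G} - 2$ with no zero-sum subsequence of length $n$. Take a zero-sum-free sequence $W$ over $G$ of length $\Dav{G} - 1$, which exists by the definition of $\Dav{G}$, and concatenate $W$ with $n-1$ copies of the identity $0 \in G$ to form $S$; then $|S| = n + \Dav{G} - 2$. Any length-$n$ subsequence of $S$ must include at least one element of $W$, since only $n-1$ zeros are available, and the contribution from $W$ is a non-empty subsequence of $W$ whose sum is non-zero by zero-sum-freeness. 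Adjoining zeros preserves this non-vanishing sum.

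For the upper bound, that the constant is at most $n + \Dav{G} - 1$, the strategy is iterative extraction of short zero-sum subsequences. Given $S$ of length $n + \Dav{G} - 1$, I would repeatedly extract non-empty zero-sum subsequences $B_1, B_2, \ldots, B_k$ of lengths $\ell_i \in [1, \Dav{G}]$ (possible while at least $\Dav{G}$ elements remain, by the definition of $\Dav{G}$), halting when fewer than $\Dav{G}$ elements are left. The total extracted length then satisfies $\sum_i \ell_i > |S| - \Dav{G} = n - 1$, so $\sum_i \ell_i \geq n$, producing a family of pairwise disjoint zero-sum subsequences whose total length is at least $n$.

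The main obstacle is to pass from total length \emph{at least} $n$ to length \emph{exactly} $n$, since the varying $\ell_i$ need not admit a subset-sum hitting $n$. I plan to address this by inducting on $|G|$, using the original Erd\H{o}s-Ginzburg-Ziv theorem for cyclic groups of prime order as the base case. For the inductive step, I would choose a proper non-trivial subgroup $H \leq G$ with quotient map $\phi : G \to G/H$ and extract zero-sum subsequences of $\phi(S)$ of length $|G/H|$ via the inductive hypothesis on $G/H$; each such subsequence lifts to a subsequence of $S$ whose $G$-sum lies in $H$. Applying the inductive hypothesis once more to the resulting sequence of $H$-values would then produce $|H|$ of these lifted subsequences whose $H$-values sum to zero in $H$, yielding a length-$n$ zero-sum subsequence of $S$. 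The hardest step will be ensuring that enough zero-sum subsequences of $\phi(S)$ can be extracted to feed the second invocation of the inductive hypothesis; this reduces to a comparison between $\Dav{G}$, $\Dav{H}$, and $\Dav{G/H}$, which requires a careful choice of $H$ based on the $p$-primary decomposition of $G$ and is the crux of Caro and Gao's argument.
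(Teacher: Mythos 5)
The paper does not prove this statement; it is quoted as a known theorem of Gao and Caro, so there is no internal proof to compare against. Judged on its own terms, your lower bound is correct and is the standard construction (a zero-sum-free sequence of length $\Dav{G}-1$ padded with $n-1$ zeros). The upper bound, however, has a genuine gap exactly where you locate the ``crux,'' and the gap is not repairable by a clever choice of $H$.

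Count what your inductive step needs. To apply the inductive hypothesis to the $H$-valued sums you must produce at least $|H|+\Dav{H}-1$ disjoint blocks, each of length $|G/H|$ with sum in $H$; extracting the last of these requires $|G/H|+\Dav{G/H}-1$ elements still available. Starting from $|S|=n+\Dav{G}-1$, this forces $\Dav{G}\ \geq\ (\Dav{H}-1)\,[G:H]+\Dav{G/H}$. For cyclic groups this holds with equality (it is precisely the classical induction proving Erd\H{o}s--Ginzburg--Ziv for composite moduli), but for $G=\Z_p\oplus\Z_p$ the only choice is $H\cong G/H\cong\Z_p$, and the condition reads $2p-1\geq (p-1)p+p=p^2$, which fails for every prime. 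The same deficit appears for every rank~$\geq 2$ group, so your induction can only deliver bounds of the shape $n+(\Dav{H}-1)[G:H]+\Dav{G/H}-1$, which is far weaker than $n+\Dav{G}-1$. The actual arguments of Gao and Caro start from a different reduction: since $ng=0$ for every $g\in G$ with $n=|G|$, translating the whole sequence by $-g$ does not affect which length-$n$ subsequences are zero-sum, so one may assume $0$ is the element of maximal multiplicity $h$. When $h\geq \Dav{G}-1$ your iterative-extraction idea does close (the partial sums of the minimal zero-sum blocks, each of length at most $\Dav{G}$, must land in the window $[n-h,n]$, and one pads with zeros); the case $h\leq \Dav{G}-2$ is the hard part of the theorem and requires additional machinery beyond both iterative extraction and the quotient induction. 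As written, your proposal proves the lower bound and the ``many repeated elements'' half of the upper bound, but not the theorem.
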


One of the aims of this paper is to explore to what extent this theorem may be generalized.  To do so, we first generalize the definition of these two constants, the former of which was previously done in work of the authors \cite{CaroGirardSchmitt21} and given again here.

Let $(G,+,\cdot)$ be a finite commutative ring. For any positive integer $m$ and any sequence $S=(g_1,\dots,g_\ell)$ over $G$, we set
$$e_m(S):=\displaystyle\sum_{1 \le i_1 < \dots < i_m \le \ell} \displaystyle\prod^m_{j=1} g_{i_j},$$ noting that operations are done coordinate-wise.

The introduction of this $m$-th degree symmetric polynomial expression given above allows for a generalization of both the Davenport constant and the Erd\H{o}s-Ginzburg-Ziv constant since both of these constants concern themselves with the vanishing of subsequences (with perhaps additional properties) on linear symmetric polynomial expressions.  We also mention that the question of the vanishing of a subsequence over certain symmetric polynomials has already appeared in \cite{AhmedBialostockiPhamLe19}, \cite{BialostockiLuong09}, and \cite{BialostockiLuong14}.

We denote by $\D{}{m}{G}$ the smallest positive integer $z$ such that every sequence $S$ over $G$ of length $|S| \ge z$ contains a subsequence $S'$ of length $|S'| \ge m$ for which $e_m(S')$ equals the zero-element in $G$.  Notice that when $m=1$ we recover the classical Davenport constant discussed above.  That is, $\D{}{1}{G}=\Dav{G}$ and in this case we prefer to use the notation $\Dav{G}$.  As a result and as $e_m(S)$ is a sum of products of degree $m$, we may consider $\D{}{m}{G}$ as the {\em $m^{th}$-degree Davenport constant}.  For results on $\D{}{m}{G}$, we refer the reader to earlier work done by the authors \cite{CaroGirardSchmitt21}.

For a finite commutative ring $G$, we denote by $\EGZ{t}{G}{m}$ the smallest positive integer $z$ such that every sequence $S$ over $G$ of length $|S| \ge z$ contains a subsequence $S'$ of length $t$ for which $e_m(S')$ evaluates to the zero-element in $G$.  If no such $z$ exists, we define $\EGZ{t}{G}{m} = \infty$.  

Of particular interest are cyclic groups $\Z_k$.  For integers $k$ and $m$, define $\s{k}{m} := \{ t: t\geq m ~{\text{and}}~ k\mid  {t \choose m} \}$.  For $k \geq 2, m \geq 1$ and $t \in \s{k}{m}$, we denote by $\EGZ{t}{\Z_k}{m}$ (or more simply $\EGZ{t}{k}{m}$) the smallest positive integer $z$ such that every sequence $S$ over $\Z_k$ of length $|S| \ge z$ contains a subsequence $S'$ of length $t$ for which $e_m(S')=0$.  The value of this function in the case $m=1$ was given by P.~Erd\H{o}s, A.~Ginzburg, A.~Ziv \cite{ErdosGinzburgZiv61} and is therefore called the {\em Erd\H{o}s-Ginzburg-Ziv constant}; they gave $\EGZ{k}{k}{1}=2k-1$.  As a result and as $e_m(S)$ is a sum of products of degree $m$, we may consider $\EGZ{t}{G}{m}$ as the {\em $m^{th}$-degree Erd\H{o}s-Ginzburg-Ziv constant of $G$}.

\begin{exa}
We give an example to show the usefulness of the condition $t \in \s{k}{m}$.  Let $m=2, t=8$ and $\Z_{10}$, i.e. $k=10$.  Let $S = (1^n)$ of length $n \geq t$.  For any length $8$ subsequence $S'$ of $S$, we have $e_2(S')={ 8 \choose 2} = 28$, which is not divisible by 10.
\end{exa}

Our results are as follows.  We begin in Section \ref{section:results} by providing a general lower bound on $\EGZ{t}{G}{m}$ for finite abelian groups $G$ in terms of $t, m$ and the $m^{th}$-degree Davenport constant.  In Subsection \ref{subsection:cyclicgroups} we focus on the case of when $G$ is a finite cyclic group, providing both lower and upper bounds for various instances of the parameters.  In Subsection \ref{subsection:Z2}, we give a precise determination of the function in the case that the group is $\Z_2$, showing that a generalization of Caro and Gao's $n+\mathsf{D}-1$ Theorem holds.  Such a generalization also holds in the case of $\Z_{p^s}$ when $t$ and $m$ are powers of the same prime as shown in Theorem \ref{thm:EGZprimepowers} and more generally for $p-$groups as shown as a consequence of Theorem \ref{thm:EGZforp-group}.  We frequently use polynomial methods or rely on results established using such methods.  We conclude our discussion in Section \ref{section:problems} with a conjecture and two problems.

\section{Results}\label{section:results}

First, we note an easy lower bound on $\D{\elem{m}{\bf x}}{m}{\Z_n}$.  Consider the sequence $\seq{1^t}$.  If $t=m$, then the only subsequence of length at least $m$ is the given sequence itself and $\elem{m}{\seq{1^t}}=1 \not \equiv 0 \pmod{n}$.  Further, suppose that for each $\ell$ with $t > \ell \geq m$ we have ${\ell \choose m} \not \equiv 0 \pmod{n}$.  Then there exists no subsequence of ${\seq{1^t}}$ of length at least $m$ which evaluates to zero modulo $n$.  Thus, we define $\Low{n}{m}$ to be the smallest integer $\ell \geq m+1$ such that ${\ell \choose m} \equiv 0 \pmod{n}$.  We have

\begin{equation}\label{eqn:L1sequence}
\D{\elem{m}{\bf x}}{m}{\Z_n} \geq \Low{n}{m}.
\end{equation}

Clearly, if $t \in \s{k}{m}$, then $t \geq max\{m+1, \Low{k}{m}\}$.  Also, note that for $k$ odd and $k \geq 3$ we have $\Low{k}{2}=k$.

As a first step towards exploring a Caro and Gao-type connection between the $m^{th}$-degree Davenport constant and the $m^{th}$-degree Erd\H{o}s-Ginzburg-Ziv constant, we provide a general lower bound on the latter.

\begin{thm}\label{thm:EGZversusDavenport}
Let $G$ be a finite abelian group (considered as a finite commutative ring).  Then $\EGZ{t}{G}{m} \geq t+\D{}{m}{G}-m$.
\end{thm}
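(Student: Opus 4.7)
The plan is to exhibit, for $z = t + \Dav_m(G) - m - 1$ (where I abbreviate $\Dav_m(G) = \D{}{m}{G}$), a sequence $S$ over $G$ of length $z$ containing no length-$t$ subsequence $S'$ with $e_m(S') = 0$. This directly gives $\EGZ{t}{G}{m} > z$, i.e. $\EGZ{t}{G}{m} \geq t + \Dav_m(G) - m$.

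\textbf{Construction.} By the definition of $\Dav_m(G)$, there exists a sequence $T$ over $G$ of length $\Dav_m(G) - 1$ such that \emph{no} subsequence $T'$ of $T$ with $|T'| \geq m$ satisfies $e_m(T') = 0_G$. Form $S$ by appending $t - m$ copies of the zero element of $G$ to $T$. Then $|S| = (\Dav_m(G) - 1) + (t-m) = t + \Dav_m(G) - m - 1$, as required. (Note $t \geq m$ is implicit, so the construction makes sense.)

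\textbf{Key step.} Let $S'$ be any subsequence of $S$ of length exactly $t$, and say $S'$ uses $j$ of the appended zeros, with $0 \leq j \leq t - m$. Write $T' \subseteq T$ for the remaining $t - j$ terms of $S'$, so $|T'| = t - j \geq m$. In the defining sum
\[
e_m(S') = \sum_{1 \le i_1 < \cdots < i_m \le t} \prod_{\ell=1}^{m} s_{i_\ell},
\]
every term that uses at least one of the $j$ zeros contributes $0_G$, so the sum collapses to $e_m(T')$. Since $T'$ is a subsequence of $T$ of length at least $m$, the defining property of $T$ gives $e_m(T') \neq 0_G$, hence $e_m(S') \neq 0_G$.

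\textbf{Why this suffices and what could go wrong.} The only subtlety is ensuring the nonzero part of any length-$t$ subsequence of $S$ is still long enough (at least $m$) to invoke the zero-sum-free property of $T$; this is exactly why we append $t - m$ zeros rather than more. The argument uses nothing about $G$ beyond having a multiplicatively absorbing zero, so it works for any finite commutative ring, and in particular for any finite abelian group regarded as a ring. There is no real obstacle: the construction is a standard ``pad with zeros'' device tailored to the symmetric polynomial $e_m$, and the bound follows immediately.
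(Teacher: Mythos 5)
Your proposal is correct and follows essentially the same route as the paper: pad an extremal sequence for $\D{}{m}{G}$ (of length $\D{}{m}{G}-1$) with $t-m$ zeros, and observe that $e_m$ of any length-$t$ subsequence collapses to $e_m$ of its nonzero part, which has length at least $m$ and hence is nonzero by extremality. The only cosmetic difference is that the paper also remarks explicitly that the extremal sequence cannot itself contain $0$, which your argument does not need since you track the appended zeros separately.
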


\begin{proof}
If $\EGZ{t}{G}{m} = \infty$, we are done.  So, we may consider the cases where this parameter is finite.

Begin by noting that, by definition, we have $\D{}{m}{G} \geq m+1$.  Let $S^*$ be a sequence over $G$ of length $\D{}{m}{G} -1$ containing no subsequence $S^{**}$ for which $e_m(S^{**})=0$.  That is, $S^*$ is an extremal sequence for the $m^{th}$ degree Davenport constant.  Notice that $S^*$ does not contain the zero-element since otherwise any subsequence $S^{**}$ of length $m$ containing this zero-element would have $e_m(S^{**}) = 0$.  Let $S=\seq{0^{t-m},S^*}$, which has length $t+\D{}{m}{G}-m-1 \geq t$.  We will show that $S$ contains no subsequence $S'$ of length $t$ for which $e_m(S')=0$.  Any such sequence must have $t-j$ $0'$s and $j$ elements of $S^*$, where $m \leq j \leq \D{}{m}{G}-1$.  We then have that $e_m(S')=e_m(S^{**})$ for some $S^{**}$ a subsequence of $S^*$.  However, by construction, $e_m(S^{**}) \neq 0$, and so $e_m(S') \neq 0$.
\end{proof}

So, compare Theorem \ref{thm:EGZversusDavenport} to Theorem \ref{thm:Gao96}.  We will show that in particular cases equality holds in Theorem \ref{thm:EGZversusDavenport} but does not hold in general.  Note that Inequality \ref{eqn:L1sequence} and \ref{thm:EGZversusDavenport} immediately yield the following.

\begin{cor}\label{thm:EGZ-Lowlowerbound}
For $t \in \s{k}{m}$, we have $\EGZ{t}{k}{m} \geq t+\Low{k}{m}-m$.
\end{cor}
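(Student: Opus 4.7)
The plan is to derive Corollary \ref{thm:EGZ-Lowlowerbound} by simply chaining two inequalities that have already been established in the preceding discussion, so no new combinatorial construction is required. First, I would apply Theorem \ref{thm:EGZversusDavenport} with $G = \Z_k$ to obtain
$$\EGZ{t}{\Z_k}{m} \geq t + \D{}{m}{\Z_k} - m,$$
which already matches the shape of the desired bound up to replacing the $m^{\text{th}}$-degree Davenport constant by $\Low{k}{m}$.

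Next, I would invoke Inequality \ref{eqn:L1sequence} with $n = k$, which furnishes $\D{}{m}{\Z_k} \geq \Low{k}{m}$ via the extremal sequence $\seq{1^{\Low{k}{m}-1}}$. Substituting this lower bound into the previous inequality gives
$$\EGZ{t}{\Z_k}{m} \geq t + \Low{k}{m} - m,$$
which is exactly the statement of the corollary, recalling that $\EGZ{t}{k}{m}$ is shorthand for $\EGZ{t}{\Z_k}{m}$.

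The hypothesis $t \in \s{k}{m}$ plays no role in the chaining argument itself; it serves only to ensure that $\EGZ{t}{k}{m}$ is being considered in its finite regime (so that the inequality is nontrivial) and, implicitly, that $\Low{k}{m}$ is a well-defined integer, since any $t\in \s{k}{m}$ with $k\geq 2$ satisfies $t\geq m+1$ and witnesses the existence of some $\ell \geq m+1$ with $k\mid \binom{\ell}{m}$. There is essentially no obstacle in this proof: both ingredients are already available, and the only micro-check is that the two lower bounds are compatible (they are, since the extremal sequence $\seq{0^{t-m},S^*}$ from the proof of Theorem \ref{thm:EGZversusDavenport} can in particular be built from an extremal sequence $S^*$ of the form $\seq{1^{\Low{k}{m}-1}}$). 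I would therefore present the corollary as a one-line deduction rather than a full argument.
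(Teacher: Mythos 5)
Your proposal is correct and matches the paper exactly: the corollary is stated there as an immediate consequence of chaining Inequality \ref{eqn:L1sequence} (which gives $\D{}{m}{\Z_k} \geq \Low{k}{m}$ via the sequence $\seq{1^{\Low{k}{m}-1}}$) with Theorem \ref{thm:EGZversusDavenport}. No further comment is needed.
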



\subsection{Results for cyclic groups}\label{subsection:cyclicgroups}

\begin{prop}\label{prop:EGZ-generalupperbound}
For $t \in \s{k}{m}$, we have $\EGZ{t}{k}{m} \leq (k-1)(t-1)+t-m+1=k(t-1)-m+2$.
\end{prop}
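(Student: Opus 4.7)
The plan is to split on the multiplicity of the identity element $0 \in \Z_k$ in an arbitrary sequence $S$ over $\Z_k$ of length at least $(k-1)(t-1)+(t-m+1) = k(t-1)-m+2$, reducing either to a trivial ``enough zeros'' case or to a classical pigeonhole over the $k-1$ nonzero residues. Write $\num{0}{S}$ for the multiplicity of $0$ in $S$.

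First, suppose $\num{0}{S} \geq t-m+1$. I would form $S'$ by taking $t-m+1$ zeros together with any $m-1$ further elements of $S$, which is possible since one easily verifies $|S|\geq t$ throughout the range $t\geq m\geq 1$, $k\geq 2$. The subsequence $S'$ then contains only $m-1$ nonzero entries, so every $m$-element sub-multiset of $S'$ must contain at least one zero; hence every product contributing to $e_m(S')$ vanishes, and $e_m(S')=0$.

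In the complementary case $\num{0}{S}\leq t-m$, the number of nonzero terms in $S$ satisfies
$$|S|-\num{0}{S} \;\geq\; \bigl(k(t-1)-m+2\bigr)-(t-m) \;=\; (k-1)(t-1)+1.$$
By pigeonhole over the $k-1$ nonzero residue classes in $\Z_k$, some value $c\neq 0$ appears at least $t$ times in $S$. Setting $S'=\seq{c^t}$ then yields $e_m(S') = \binom{t}{m}\,c^m \equiv 0 \pmod{k}$, using the hypothesis $t\in\s{k}{m}$, which gives $k\mid\binom{t}{m}$.

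The only delicate point is calibrating the threshold $t-m+1$ for $\num{0}{S}$: it is chosen precisely so that the residual count in the second case is exactly $(k-1)(t-1)+1$, the pigeonhole threshold for forcing $t$ copies of a single value among $k-1$ classes. No deeper structure of $\Z_k$ or of $e_m$ enters the argument, which also suggests (correctly) that the resulting bound is generally weak---for instance, when $m=1$ it collapses to the crude $k(t-1)+1$ rather than the sharp Erd\H{o}s--Ginzburg--Ziv value $2k-1$.
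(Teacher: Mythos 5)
Your proof is correct and is essentially the paper's own argument: the paper likewise observes that if some nonzero element repeats $t$ times one takes $S'=(g^t)$ with $e_m(S')=g^m\binom{t}{m}\equiv 0$, and otherwise the at most $(k-1)(t-1)$ nonzero entries force at least $t-m+1$ zeros, from which $S''=(0^{t-m+1},g_1,\ldots,g_{m-1})$ works. The only difference is cosmetic (you case-split on the multiplicity of $0$ rather than on the existence of a $t$-fold repeated nonzero element), so there is nothing to change.
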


\begin{proof}
Let $S$ be a sequence over $\Z_k$ of length $(k-1)(t-1)+t-m+1$.  If some non-zero element $g$ appears $t$ times, then there exists a subsequence $S'=(g^t)$ and we have $e_m(S') = g^m{t \choose m} \equiv 0 \pmod{k}$.  This implies that there are at most $(k-1)(t-1)$ non-zero elements in $S$ and at least $t-m+1$ elements which are $0$.  We may form the length-$t$ subsequence $S''=(0^{t-m+1}, g_1, g_2, \ldots , g_{m-1})$.  It is easy to see that $e_m(S'')=0$.
\end{proof}

Going further, while Proposition \ref{prop:EGZ-generalupperbound} gives that $\EGZ{3}{3}{2} \leq 6$, it is not hard to see that equality, in fact, holds by considering the length-$5$ sequence $S=(0,1^2,2^2)$ over $\Z_3$ and checking that for every subsequence $S'$ of $S$ with $|S'| =3$, one has $e_2(S') \not \equiv 0 \pmod 3$.

Now consider the following result given in \cite{CaroGirardSchmitt21}.

\begin{prop}\label{prop:Low}\cite{CaroGirardSchmitt21}
For a prime $p$ and integers $s$ and $u$, we have $\Low{p^s}{p^u} = p^{s+u}.$
\end{prop}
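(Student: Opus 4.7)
The plan is to invoke Kummer's theorem: for nonnegative integers $a,b$, the $p$-adic valuation $\nu_p\!\left(\binom{a+b}{a}\right)$ equals the number of carries produced when adding $a$ and $b$ in base $p$. Setting $a = p^u$ and $b = \ell - p^u$, the condition $p^s \mid \binom{\ell}{p^u}$ translates into: adding $p^u$ to $\ell - p^u$ in base $p$ produces at least $s$ carries. So I would look for the smallest $\ell \geq p^u+1$ for which this holds, and then verify that value is $p^{u+s}$.

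Next I would analyze the carry cascade. Since $p^u$ has base-$p$ expansion with a single digit $1$ in position $u$ and zeros elsewhere, the addition triggers a carry at position $u$ precisely when the base-$p$ digit $n_u$ of $n := \ell - p^u$ equals $p-1$. If it does, position $u$ rolls over to $0$ and a unit is pushed to position $u+1$, which in turn carries iff $n_{u+1} = p-1$, and so on. Hence the total number of carries equals the largest $k$ such that $n_u = n_{u+1} = \cdots = n_{u+k-1} = p-1$.

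To achieve at least $s$ carries, we therefore need the base-$p$ digits of $n$ to satisfy $n_u = n_{u+1} = \cdots = n_{u+s-1} = p-1$. Since $n = \sum_i n_i p^i$ with all terms nonnegative, this forces
\[
n \;\geq\; \sum_{j=0}^{s-1} (p-1)\, p^{u+j} \;=\; (p-1)\, p^u \cdot \frac{p^s-1}{p-1} \;=\; p^{u+s} - p^u,
\]
with equality exactly when all remaining base-$p$ digits of $n$ vanish. The smallest admissible $\ell$ is then $(p^{u+s}-p^u) + p^u = p^{u+s}$, matching the claim. No serious obstacle arises: once Kummer's theorem is applied, the entire argument reduces to the elementary carry-cascade observation above, together with the minimization of $n$ subject to prescribing $s$ specified digits to be $p-1$.
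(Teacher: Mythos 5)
Your argument is correct: Kummer's theorem reduces the divisibility condition $p^s \mid \binom{\ell}{p^u}$ to counting carries in the base-$p$ addition of $p^u$ and $\ell - p^u$, and your carry-cascade analysis correctly shows that at least $s$ carries forces $\ell - p^u \geq p^{s+u}-p^u$ with equality attainable, so $\ell = p^{s+u}$ is the minimizer. The paper imports this proposition from \cite{CaroGirardSchmitt21} without reproving it, but your approach is exactly the Kummer-based digit analysis the paper uses for its closely related Lemmas \ref{lem:mplus2^nu} and \ref{lem:intervalhaswhatweneed}, so there is nothing substantive to contrast.
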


\begin{exa}
By Proposition \ref{prop:Low}, we have $\Low{5}{5}=25$.  Together with Theorem \ref{thm:EGZ-Lowlowerbound}, we have $\EGZ{25}{5}{5} \geq 45$.  Theorem \ref{thm:EGZprimepowers} given below will show this bound is sharp.
\end{exa}

\begin{thm}
Let $k$ be odd and let $r$ be an integer such that $r|k|r^2$.  Then $\D{}{2}{\Z_k} \leq k+r$. 
\end{thm}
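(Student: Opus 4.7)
The plan is to exploit the identity $2e_2(S') = f(S')^2 - g(S')$, where $f(S') := \sum_{a \in S'} a$ and $g(S') := \sum_{a \in S'} a^2$. Since $k$ is odd and $2$ is invertible modulo $k$, the condition $e_2(S') \equiv 0 \pmod k$ is equivalent to $f(S')^2 \equiv g(S') \pmod k$. The hypothesis $r \mid k \mid r^2$ is equivalent to writing $k = rs$ with $s \mid r$, and its crucial consequence is that whenever $f(S') \equiv 0 \pmod r$, we have $f(S')^2 \in r^2\Z_k = \{0\}$ (because $k \mid r^2$). This reduces the problem to producing a subsequence $S'$ of length at least $2$ with $f(S') \equiv 0 \pmod r$ and $g(S') \equiv 0 \pmod k$.

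Given $S$ of length $k+r = r(s+1)$ over $\Z_k$, I would view each element $a \in S$ as the pair $(a, a^2) \in \Z_r \oplus \Z_r$ obtained by reduction modulo $r$. Iteratively extracting zero-sum subsequences in $\Z_r \oplus \Z_r$ using Olson's bound $\Dav{\Z_r \oplus \Z_r} = 2r - 1$ (and a length-$r$ Kemnitz-type extraction where affordable), I would obtain $s$ pairwise disjoint subsequences $T_1, \ldots, T_s$ with $f(T_i) \equiv 0 \pmod r$ \emph{and} $g(T_i) \equiv 0 \pmod r$. This double vanishing yields $2e_2(T_i) = f(T_i)^2 - g(T_i) \in r\Z_k$ and hence $e_2(T_i) \in r\Z_k$ (since $r$ is odd, $2$ is a unit in the subgroup $r\Z_k \cong \Z_s$). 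Define $\beta_i := e_2(T_i)/r \in \Z_s$.

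For any $I \subseteq [s]$, writing $T_I := \bigcup_{i \in I} T_i$, the cross-terms $f(T_i)f(T_j) \in r^2\Z_k$ all vanish modulo $k$, so that $e_2(T_I) \equiv \sum_{i \in I} e_2(T_i) \pmod k$. Applying the classical bound $\Dav{\Z_s} = s$ to the sequence $(\beta_1, \ldots, \beta_s)$ in $\Z_s$ produces a non-empty $I \subseteq [s]$ with $\sum_{i \in I} \beta_i \equiv 0 \pmod s$, equivalently $e_2(T_I) \equiv 0 \pmod k$, with $|T_I| \geq 2$ as required.

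The main obstacle is to actually produce the $s$ disjoint double-zero-sum subsequences from the tight supply of $r(s+1)$ elements; the extraction must carefully balance Kemnitz-style length-$r$ extractions with cheaper Davenport-style ones of smaller size. Several edge cases short-circuit the argument: two elements of $S$ lying in $r\Z_k$ already give a length-$2$ subsequence with $e_2 \equiv 0 \pmod k$, and any zero element of $S$ pairs with another element to give $e_2 = 0$. The only remaining degenerate case, a singleton $T_i = (rc)$ with $c \neq 0$ forcing the Davenport-in-$\Z_s$ step to return $I = \{i\}$ with $|T_I| = 1$, is handled by re-selecting the index set using the standard fact that a zero-sum-free sequence of length $s-1$ in $\Z_s$ has subset sums covering all of $\Z_s \setminus \{0\}$, which lets us replace the offending singleton by a legitimate length-$\geq 2$ union.
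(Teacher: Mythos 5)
Your opening reduction is exactly the paper's: the identity $2e_2(S')=\bigl(\sum_{a\in S'}a\bigr)^2-\sum_{a\in S'}a^2$, the invertibility of $2$ for odd $k$, and the observation that $k\mid r^2$ kills the square of any sum divisible by $r$. But your implementation has a genuine gap at the step you yourself flag as ``the main obstacle,'' and it is not a routine bookkeeping matter. You need $s=k/r$ pairwise disjoint nonempty subsequences that are zero-sum in $\Z_r\oplus\Z_r$, extracted from only $k+r=r(s+1)$ elements. Greedy extraction via $\Dav{\Z_r\oplus\Z_r}=2r-1$ requires roughly $s(2r-1)$ elements, far more than you have; and extraction of length-exactly-$r$ zero-sums via Reiher's theorem requires $4r-3$ elements to remain at each step, which by a direct count yields only $s-1$ disjoint pieces for $r\geq 4$ --- one short of what your Davenport-in-$\Z_s$ step needs. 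The extraction can in fact be salvaged, but only by invoking the $\eta$-invariant $\eta(\Z_r\oplus\Z_r)=3r-2$ (every sequence of that length has a nonempty zero-sum subsequence of length \emph{at most} $r$), equivalently the generalized Davenport constant $\mathsf{D}_s(\Z_r\oplus\Z_r)=sr+r-1$; this is a substantially deeper input than anything the statement requires, and you neither name it nor supply a substitute. As written, the proof does not close.

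The idea you are missing is to keep the second coordinate modulo $k$ rather than modulo $r$: map each $g_i$ to $[g_i,g_i^2]\in\Z_r\oplus\Z_k$. Since $r\mid k$ this is still a rank-$2$ group, so Olson's theorem (Theorem \ref{thm:Olson}) gives $\Dav{\Z_r\oplus\Z_k}=r+k-1$, and a \emph{single} extraction from your length-$(k+r)$ sequence already produces a nonempty $J$ with $\sum_{j\in J}g_j\equiv 0\pmod r$ and $\sum_{j\in J}g_j^2\equiv 0\pmod k$ simultaneously; your identity then finishes immediately (after a short argument guaranteeing $|J|\geq 2$, which replaces your list of edge cases). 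This one change eliminates the entire second stage of your argument --- the $\beta_i$'s, the Davenport step in $\Z_s$, the vanishing of cross-terms, and the singleton repair --- along with the unproved extraction lemma on which all of it rests.
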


\begin{proof}
Let $S=(g_1,\ldots, g_{k+r})$ be a sequence over $\Z_k$ of length $k+r$.  Consider the following sequence over $\Z_r \oplus \Z_k$: $([g_1, g_1^2], \ldots [g_{k+r},g_{k+r}^2])$.  As $r|k$ the group $\Z_r \oplus \Z_k$ is a rank-$2$ abelian group and so we may apply the Olson's Theorem (i.e. Theorem \ref{thm:Olson}) which says that $\Dav{\Z_r \oplus \Z_k}=k+r-1$.  That is, we have that there is a non-empty subset $J \subset [k+r]$ such that $\sum_{j \in J}g_j \equiv 0 \pmod r$ and $\sum_{j \in J}g_j^2 \equiv 0 \pmod k$.  We show that there exists such a $J$ such that $|J| \geq 2$. If not, then $|J|=1$ and so $g_1=0$.  Remove this element from the sequence $S$ to obtain $S\setminus g_1.$  Apply Olson's Theorem to this sequence and obtain a $J'$ with the same properties as $J$.  If $|J'|=1$ also, then $|J \cup J'| \geq 2$.  So we now may assume that we have a $J$ such that $|J| \geq 2$.  Observe that $(\sum_{j \in J}g_j)^2=\sum_{j \in J}g_j^2+2\sum_{i,j \in J, i\neq j}g_ig_j$.  Since $\sum_{j \in J}g_j \equiv 0 \pmod r$, it follows that $(\sum_{j \in J}g_j)^2 \equiv 0 \pmod {r^2}$.  As $k|r^2$, we have $(\sum_{j \in J}g_j)^2 \equiv 0 \pmod k$.  We now have that $(\sum_{j \in J}g_j)^2 \equiv 0 \pmod k$ {\em and} $\sum_{j \in J}g_j^2 \equiv 0 \pmod k$, implying that $2\sum_{i,j \in J, i\neq j}g_ig_j \equiv 0 \pmod k$.  However, as $k$ is odd it follows that $\sum_{i,j \in J, i\neq j}g_ig_j \equiv 0 \pmod k$.  Let $S'$ be the subsequence of $S$ as chosen by $J$, then $e_2(S') \equiv 0 \pmod k$.  

\end{proof}

\begin{thm}\label{thm:EGZupperbound}
Suppose that $k$ is odd. 
\begin{enumerate}
\item Let $\ell \geq 1$, and let $r$ be an integer such that $r|k|r^2$.  Then $\EGZ{\ell k}{k}{2} \leq (\ell +1)k+2r-3$.  In particular, if $k=r^2$, then $\EGZ{k}{k}{2} \leq 2r^2+2r-3$.
\item $\EGZ{k}{k}{2} \geq k+\D{}{2}{\Z_k}-2$.
\item In the case that $k=p$ is an odd prime: for $p \equiv 1 \pmod 4$ we have $\EGZ{p}{p}{2} \geq 2p-1$ and for $p \equiv 3 \pmod 4$ we have $\EGZ{p}{p}{2} \geq 2p$.
\end{enumerate}

\end{thm}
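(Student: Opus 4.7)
My plan is to extend the lifting trick used in the proof of the preceding $\D{}{2}{\Z_k} \le k+r$ result. Each term $g_i$ of the input sequence $S$ over $\Z_k$ is lifted to $(g_i \bmod r,\ g_i^2 \bmod k)$ in the rank-$2$ group $\Z_r \oplus \Z_k$, which is legitimate since $r \mid k$. I then invoke the known value $\mathsf{s}(\Z_r \oplus \Z_k) = 2r + 2k - 3$ for rank-$2$ groups with $r \mid k$ (a consequence of the Kemnitz--Reiher theorem and its extensions to the non-homocyclic case), together with the standard reduction that every sequence over an abelian group $G$ of exponent $k$ and length at least $(\ell-1)k + \mathsf{s}(G)$ contains a zero-sum subsequence of length exactly $\ell k$ (by iteratively extracting a length-$k$ zero-sum subsequence and recursing on the complement). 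Applied to the lifted sequence of length $(\ell-1)k + (2r + 2k - 3) = (\ell+1)k + 2r - 3$, this yields a subsequence $S'$ of length $\ell k$ satisfying $\sum_{g \in S'} g \equiv 0 \pmod r$ and $\sum_{g \in S'} g^2 \equiv 0 \pmod k$. The algebraic identity $\bigl(\sum g\bigr)^2 = \sum g^2 + 2\,e_2(S')$, combined with $k \mid r^2$ and the oddness of $k$, then delivers $e_2(S') \equiv 0 \pmod k$, exactly as in the proof of the preceding theorem. The special case $k = r^2$ is obtained by substituting $k = r^2$ into $(\ell+1)k + 2r - 3$ with $\ell = 1$.

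\textbf{Part (2).} This is immediate from Theorem \ref{thm:EGZversusDavenport} applied with $t = k$, $m = 2$, and $G = \Z_k$.

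\textbf{Part (3).} I exhibit explicit extremal sequences. The base construction is $S = (1^{p-1}, (-1)^{p-1})$ of length $2p-2$. Every length-$p$ subsequence $S'$ must consist of $j$ ones and $p-j$ copies of $-1$ with $1 \le j \le p-1$, and a direct mod-$p$ computation of $\binom{j}{2} - j(p-j) + \binom{p-j}{2}$ yields $e_2(S') \equiv 2j^2 \not\equiv 0 \pmod p$. This establishes $\EGZ{p}{p}{2} \ge 2p-1$ for every odd prime $p$, and in particular for $p \equiv 1 \pmod 4$. For $p \equiv 3 \pmod 4$, I prepend a $0$, taking $S = (0, 1^{p-1}, (-1)^{p-1})$ of length $2p - 1$, and split length-$p$ subsequences according to whether they contain the $0$. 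Those avoiding the $0$ are handled by the case above. For a subsequence containing the $0$ --- with $j$ ones and $p-1-j$ copies of $-1$ --- the pairs involving the $0$ contribute nothing, and a short simplification expresses $e_2(S')$ modulo $p$ as a nonzero scalar multiple of $u^2 + 1$, where $u = j - (p-1-j)$. Since $-1$ is a non-residue modulo $p$ when $p \equiv 3 \pmod 4$, the expression $u^2 + 1$ never vanishes modulo $p$, yielding the improved bound $\EGZ{p}{p}{2} \ge 2p$.

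The main obstacle I anticipate is in Part (1): supplying the precise value of $\mathsf{s}(\Z_r \oplus \Z_k)$ for non-homocyclic rank-$2$ groups with $r \mid k$ and locating the correct reference, since Kemnitz--Reiher is classically stated only for the homocyclic case $r = k$. Once $\mathsf{s}(\Z_r \oplus \Z_k) = 2r + 2k - 3$ is in hand, the inductive reduction to length $\ell k$ and the algebraic step converting rank-$2$ zero-sums to the vanishing of $e_2$ are routine; the constructions in Part (3) are then elementary calculations that use only the quadratic character of $-1$ modulo $p$.
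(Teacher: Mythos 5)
Your proposal is correct. Parts (1) and (2) follow the paper's argument essentially verbatim: the same lift $g_i \mapsto (g_i, g_i^2) \in \Z_r \oplus \Z_k$, the same appeal to Theorem \ref{thm:Reiher} (which, as stated in the paper via Geroldinger--Halter-Koch, already covers the non-homocyclic case $r \mid k$ that you flag as your main worry, so that concern evaporates), the same iterative extraction of $\ell$ disjoint length-$k$ zero-sum subsequences from a sequence of length $(\ell-1)k + (2k+2r-3)$, and the same identity $\bigl(\sum g\bigr)^2 = \sum g^2 + 2e_2$ combined with $k \mid r^2$ and the oddness of $k$. Part (3) is where you genuinely diverge: the paper deduces both lower bounds by citing the inequalities $\D{}{2}{\Z_p} \geq p+1$ (and $\geq p+2$ when $p \equiv 3 \pmod 4$) from \cite{CaroGirardSchmitt21} and feeding them into Part (2), whereas you build explicit extremal sequences $(1^{p-1},(-1)^{p-1})$ and $(0,1^{p-1},(-1)^{p-1})$ and verify non-vanishing directly. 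Your computations check out: for $j$ ones and $p-j$ copies of $-1$ one gets $e_2 \equiv 2j^2 \pmod p$, and for the subsequences through the $0$ one gets $e_2 \equiv \tfrac{1}{2}\bigl((2j+1)^2+1\bigr) \pmod p$, which is nonzero precisely because $-1$ is a quadratic non-residue when $p \equiv 3 \pmod 4$. What your route buys is self-containment --- the reader sees the extremal sequences and the role of the quadratic character of $-1$ explicitly, rather than chasing a citation; what the paper's route buys is brevity and the reuse of the Davenport-constant machinery it has already set up, making clear that Part (3) is just Part (2) plus known lower bounds on $\D{}{2}{\Z_p}$.
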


Before giving the proof of Theorem \ref{thm:EGZupperbound}, we need to recall a celebrated result that we use in the proof.  In 2007 C.~Reiher \cite{Reiher07} used the Chevalley-Warning Theorem and combinatorial arguments to establish a well-known conjecture of A.~Kemnitz; Reiher proved that the minimum number of points one needs to take from $\Z_n \oplus \Z_n$ so that there always exists $n$ of them summing to $(0,0)$ is $4n-3$.  This was generalized in \cite{GeroldingerHalter-Koch06} as follows.

\begin{thm}[C.~Reiher \cite{Reiher07}; A.~Geroldinger, F.~Halter-Koch \cite{GeroldingerHalter-Koch06}]\label{thm:Reiher}
Let $G=\Z_{n_1}\oplus \Z_{n_2}$ with $1 \leq n_1|n_2$.  Then $\EGZ{n_2}{\Z_{n_1} \oplus \Z_{n_2}}{1}=2n_1+2n_2-3.$
\end{thm}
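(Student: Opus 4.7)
The plan has two parts: a short construction giving the lower bound, and then the upper bound, which is the substantive content of the theorem. For the lower bound, I would exhibit a sequence over $G=\Z_{n_1}\oplus\Z_{n_2}$ of length $2n_1+2n_2-4$ that contains no zero-sum subsequence of length exactly $n_2$. The natural candidate is
\[
S = (0,0)^{n_2-1} \cdot (0,1)^{n_2-1} \cdot (1,0)^{n_1-1} \cdot (1,1)^{n_1-1}.
\]
If a length-$n_2$ subsequence of $S$ uses $a,b,c,d$ copies of these four elements respectively, zero-sum forces $b+d \equiv 0 \pmod{n_2}$ and $c+d \equiv 0 \pmod{n_1}$, while $a+b+c+d=n_2$. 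Since $b+d \le (n_2-1)+(n_1-1) < 2n_2$ we must have $b+d\in\{0,n_2\}$, and each alternative quickly contradicts the bounds on $a,b,c,d$.

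For the upper bound the strategy is a reduction down to primes. First I would reduce the general case $n_1 \mid n_2$ to the case $n_1=n_2=n$: given a sequence of length $2n_1+2n_2-3$ in $\Z_{n_1}\oplus\Z_{n_2}$, compose with the embedding $\Z_{n_1}\oplus\Z_{n_2}\hookrightarrow \Z_{n_2}\oplus\Z_{n_2}$ (multiplying the first coordinate by $n_2/n_1$) and augment with copies of the zero element to produce a sequence in $\Z_{n_2}\oplus\Z_{n_2}$ from which a length-$n_2$ zero-sum can be pulled back. Next, reduce the square case $n_1=n_2=n$ to prime powers by Chinese remaindering; this uses the standard fact that if $n=uv$ with $\gcd(u,v)=1$ and Kemnitz holds for $\Z_u\oplus\Z_u$ and $\Z_v\oplus\Z_v$ then it holds for $\Z_n\oplus\Z_n$, obtained by extracting repeated length-$u$ zero-sums modulo $u$ and then treating their $v$-coordinate residues as a sequence in $\Z_v\oplus\Z_v$. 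Finally, for prime powers $n=p^s$, use induction on $s$: a sequence of length $4p^s-3$ admits, by the induction hypothesis applied modulo $p$, many disjoint length-$p$ zero-sum blocks; dividing each such block's sum by $p$ produces a new sequence in $\Z_{p^{s-1}}\oplus\Z_{p^{s-1}}$, and the count $4p^s-3 \ge p(4p^{s-1}-3)+(3p-3)$ leaves exactly enough room for the inductive hypothesis to apply.

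All weight is thus thrown onto the base case: $\EGZ{p}{\Z_p\oplus\Z_p}{1} \le 4p-3$ for prime $p$, which is Kemnitz's conjecture. Here I would follow Reiher and apply the Chevalley--Warning theorem to the $\F_p$-polynomial system
\[
\sum_{i=1}^{4p-3} a_i\, x_i^{p-1}=0,\qquad \sum_{i=1}^{4p-3} b_i\, x_i^{p-1}=0,\qquad \sum_{i=1}^{4p-3} x_i^{p-1}=0,
\]
where $(a_i,b_i)$ is the $i$-th element of the given sequence. The total degree $3(p-1)$ is strictly less than $4p-3$, so Chevalley--Warning yields a non-trivial solution; by Fermat, the support $I=\{i:x_i\ne0\}$ satisfies $|I|\equiv 0\pmod p$ and $\sum_{i\in I}(a_i,b_i)=(0,0)$.

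The genuine obstacle is that $|I|$ could a priori equal $2p$ or $3p$ rather than $p$. Ruling this out is the delicate heart of Reiher's argument. The plan there is to assume for contradiction that no length-$p$ zero-sum subsequence exists and show this forces, via a saturation argument on the multiset of $(a_i,b_i)$ modulo one-dimensional subgroups of $\Z_p\oplus\Z_p$, that at most $4p-4$ terms can be accommodated. Concretely one partitions the sequence by direction, uses the one-dimensional EGZ bound $\EGZ{p}{\Z_p}{1}=2p-1$ on each direction to build length-$p$ candidates, and combines many such candidates via an inclusion-exclusion / polynomial identity of Reiher to promote a length-$2p$ or length-$3p$ zero-sum into a length-$p$ one, contradicting the assumption. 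This combinatorial/polynomial promotion step is where nearly all the difficulty lives; the rest of the proof is bookkeeping around it.
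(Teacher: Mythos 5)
This theorem is not proved in the paper at all: it is imported verbatim from the literature (Reiher's resolution of Kemnitz's conjecture for the case $n_1=n_2$, and Geroldinger--Halter-Koch for the extension to $n_1\mid n_2$), so there is no in-paper argument to compare against; your proposal has to stand on its own. Parts of it do: the lower-bound sequence $(0,0)^{n_2-1}(0,1)^{n_2-1}(1,0)^{n_1-1}(1,1)^{n_1-1}$ is the standard Harborth-type example and your case analysis is correct, and the two multiplicative reductions (Chinese remaindering on $n$, and the induction on $s$ for $n=p^s$ by extracting $4p^{s-1}-3$ disjoint length-$p$ blocks that are zero-sum mod $p$ and passing to their sums divided by $p$) are both sound -- the counts check out exactly as you say.

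Two genuine gaps remain. First, the reduction from $n_1\mid n_2$ to $n_1=n_2$ by embedding and padding with $2n_2-2n_1$ zeros does not work: the length-$n_2$ zero-sum subsequence that Kemnitz's theorem produces in the padded sequence may consume some (or all) of the artificial zeros, and discarding them leaves a zero-sum subsequence of the original sequence of length strictly less than $n_2$, which cannot in general be re-inflated to length exactly $n_2$. The degenerate case $n_1=1$ makes this vivid: for $S=(1^{2n-1})$ over $\Z_n$, the padded sequence $(0,1)^{2n-1}(0,0)^{2n-2}$ admits the zero-sum subsequence $(0,0)^n$, which says nothing about $S$; so the reduction would not even recover the Erd\H{o}s--Ginzburg--Ziv theorem. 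The actual route in Geroldinger--Halter-Koch goes through the constant $\eta(G)$ (shortest length forcing a zero-sum subsequence of length \emph{at most} $\exp(G)$), the evaluation $\eta(\Z_{n_1}\oplus\Z_{n_2})=2n_1+n_2-2$, and the identity $s(G)=\eta(G)+\exp(G)-1$ for rank-two groups; that identity is itself a nontrivial theorem, not a padding trick. Second, your ``base case'' is Kemnitz's conjecture itself, and the paragraph devoted to it is a description of where the difficulty lies rather than an argument: the Chevalley--Warning step only yields a zero-sum subsequence of length $p$, $2p$, or $3p$, and the machinery that excludes the latter two possibilities (Reiher's system of counting lemmas for the number of length-$p$ zero-sum subsequences modulo $p$ of sequences of lengths near $3p$ and $4p$) is the entire content of the theorem. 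As written, the proposal reduces the statement to two results it does not prove, one of which it misidentifies as an easy padding argument.
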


For further remarks and results on $\EGZ{k}{\Z_k^d}{1}$, we point the reader to the work of N.~Alon and M.~Dubiner \cite{AlonDubiner93}.  We now proceed to the proof of Theorem \ref{thm:EGZupperbound}.

\begin{proof}
\begin{enumerate}

\item Let $S=(g_1,\ldots, g_{(\ell+1)k+2r-3})$ be a sequence over $\Z_k$ of length $(\ell+1)k+2r-3$.  Recall a basic algebraic fact: $2 \sum_{1 \leq i \neq j \leq d}g_ig_j = (g_1+\cdots +g_d)^2-(g_1^2+\cdots +g_d^2)$.  Consider the following sequence over $\Z_r \oplus \Z_k$: $$([g_1, g_1^2], \ldots [g_{2k+2r-3},g_{(\ell+1)k+2r-3}^2]).$$  As $r|k$ the group $\Z_r \oplus \Z_k$ is a rank-$2$ abelian group with exponent $k$.  For this group, by Theorem \ref{thm:Reiher} as $r|k$ we have $\EGZ{k}{\Z_r \oplus \Z_k}{1}=2k+2r-3$.  That is, we have that there exists $\ell$ disjoint non-empty subsets $J_1, \ldots, J_{\ell} \subset [(\ell+1)k+2r-3]$ with $|J_m|=k$ for $1 \leq m \leq \ell$ such that $\sum_{j \in J_m}g_j \equiv 0 \pmod r$ and $\sum_{j \in J_m}g_j^2 \equiv 0 \pmod k$.  Let $J:= \cup_{m=1}^{\ell} J_m$.  It follows that that $\sum_{j \in J}g_j \equiv 0 \pmod r$ and $\sum_{j \in J}g_j^2 \equiv 0 \pmod k$.  Since $\sum_{j \in J}g_j \equiv 0 \pmod r$, it follows that $(\sum_{j \in J}g_j)^2 \equiv 0 \pmod {r^2}$.  As $k|r^2$, we have $(\sum_{j \in J}g_j)^2 \equiv 0 \pmod k$.  We now have that $(\sum_{j \in J}g_j)^2 \equiv 0 \pmod k$ {\em and} $\sum_{j \in J}g_j^2 \equiv 0 \pmod k$, implying that $2\sum_{i,j \in J, i\neq j}g_ig_j \equiv 0 \pmod k$.  However, as $k$ is odd it follows that $\sum_{i,j \in J, i\neq j}g_ig_j \equiv 0 \pmod k$.  Let $S'$ be the subsequence of $S$ as chosen by $J$, then $e_2(S') \equiv 0 \pmod k$.   
 
 \item By Theorem \ref{thm:EGZversusDavenport}, if $t \in \s{k}{m}$, then $\EGZ{t}{k}{m} \geq \D{}{m}{\Z_k}+t-m$.  For $k \geq 3$, we have $k \in \s{k}{2}$ if and only if $k$ is odd.  Thus, $\EGZ{k}{k}{2} \geq \D{}{2}{\Z_k}+k-2$.

\item  By a result in \cite{CaroGirardSchmitt21} (see Section 5 of \cite{CaroGirardSchmitt21}), we have $\D{}{2}{\Z_p} \geq p+1$ for any prime and a stronger bound holds in the case that $p \equiv 3 \pmod 4$ of $\D{}{2}{\Z_p} \geq p+2$.  By this result and Part 2, the result follows. 
\end{enumerate}
\end{proof}

\begin{rem}
Note that the condition of $p$ being an odd prime give in Theorem \ref{thm:EGZupperbound}.3 is necessary as $S=\seq{1^t}$ shows that $\EGZ{2}{2}{2} = \infty.$
\end{rem}

\begin{thm}\label{thm:m=3case}

\leavevmode\\

\begin{enumerate}
\item  Let $k, m$ be positive integers such that $\gcd(k,m!)=1$.  Then $\EGZ{k}{k}{m} \leq \EGZ{k}{\Z_k^{\lceil \frac{m+1}{2} \rceil }}{1}$.  More strongly, if $\gcd(k,3)=1$, then $\EGZ{k}{k}{3} \leq 4k-3$.
\item Let $k$ be a positive integer such that $\gcd(k,3)=1$ (i.e. $k \equiv 1,2 \pmod 3$).  Then $\EGZ{k}{k}{3} \geq k+\D{}{3}{\Z_k}-3$.
\item Let $q$ be a prime power.  Then $\EGZ{q}{q}{3} \geq 2q-3$.
\end{enumerate}
\end{thm}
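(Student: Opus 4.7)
The three parts separate naturally, with Part 1 the main content and Parts 2 and 3 much easier. For Part 1, my plan is to reduce to the ordinary EGZ constant in the higher-rank group $\Z_k^r$ via an embedding by powers, then recover $e_m(S') = 0$ using Newton's identities. Set $r = \lceil (m+1)/2 \rceil$ and pick an index set $J \subset \{1, \ldots, m\}$ of cardinality $r$: take $J = \{1, 3, 5, \ldots, m\}$ for $m$ odd and $J = \{1, 2, \ldots, m/2, m\}$ for $m$ even. Given a sequence $S$ over $\Z_k$ of length $\EGZ{k}{\Z_k^r}{1}$, I embed each $g \in S$ as the vector $(g^j)_{j \in J} \in \Z_k^r$. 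By definition of the EGZ constant for $\Z_k^r$, the embedded sequence admits a length-$k$ zero-sum, yielding a length-$k$ subsequence $S'$ of $S$ with $p_j(S') \equiv 0 \pmod{k}$ for every $j \in J$, where $p_j(S') = \sum_i g_i^j$. The core verification is then that the Newton recurrence $j e_j = \sum_{i=1}^{j} (-1)^{i-1} p_i e_{j-i}$, applied iteratively (with $\gcd(k, m!) = 1$ allowing division by $j$), forces $e_m(S') \equiv 0 \pmod k$: with the chosen $J$, every summand of $m e_m = \sum_{i=1}^{m} (-1)^{i-1} p_i e_{m-i}$ vanishes because either $i \in J$ (so $p_i = 0$) or $e_{m-i}$ was established zero earlier in the induction. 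The strengthening for $m = 3$ uses $J = \{1, 3\}$, so $r = 2$: from $3 e_3 = p_1 e_2 - p_2 e_1 + p_3$ with $e_1 = p_1 = 0$ and $p_3 = 0$ we get $3 e_3 \equiv 0 \pmod k$, hence $e_3 \equiv 0$ under merely $\gcd(k, 3) = 1$; Reiher's theorem (Theorem \ref{thm:Reiher}) then supplies $\EGZ{k}{\Z_k^2}{1} = 4k - 3$.

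For Part 2, the bound is a direct application of Theorem \ref{thm:EGZversusDavenport} with $t = k$, $G = \Z_k$, $m = 3$, once we verify $k \in \s{k}{3}$. Indeed $k \mid \binom{k}{3}$ is equivalent to $6 \mid (k-1)(k-2)$; the product of consecutive integers is even, so this reduces to $3 \mid (k-1)(k-2)$, which holds precisely when $\gcd(k, 3) = 1$.

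For Part 3, I will exhibit an extremal sequence: take $S = (1^{q-1}, 0^{q-3})$ over $\Z_q$, of length $2q - 4$. Any length-$q$ subsequence $S'$ has $j$ copies of $1$ and $q - j$ zeros with $3 \le j \le q - 1$, giving $e_3(S') = \binom{j}{3} = j(j-1)(j-2)/6$. The main step is to show $q \nmid \binom{j}{3}$ throughout; writing $q = p^{\alpha}$, for $p \ge 5$ this holds because $p^{\alpha}$ would have to divide one of the three consecutive integers $j, j-1, j-2 \in [1, q-1]$, which is impossible, and for $p = 2$ a short case analysis of $v_2(j(j-1)(j-2))$ in each parity of $j$ suffices to show the required power of $2$ is never achieved in the range. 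The case $p = 3$ is vacuous since then $q \notin \s{q}{3}$ and $\EGZ{q}{q}{3} = \infty$. The main obstacle overall is the Newton's-identity bookkeeping in Part 1, specifically verifying that the chosen index set $J$ causes $e_m$ to vanish term-by-term; Parts 2 and 3 are essentially immediate once that framework is in place.
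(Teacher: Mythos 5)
Your proposal is correct and follows essentially the same route as the paper: Part 1 is the same power-map embedding of $S$ into $\Z_k^{\lceil (m+1)/2\rceil}$ followed by Newton's identities (your inductive use of the recurrence with the index sets $\{1,3,\ldots,m\}$ and $\{1,\ldots,m/2,m\}$ is a hands-on verification of what the paper packages as the ``dominating set'' lemma), Part 2 is the identical appeal to Theorem \ref{thm:EGZversusDavenport}, and your extremal sequence $(1^{q-1},0^{q-3})$ in Part 3 is exactly the sequence produced by combining Theorem \ref{thm:EGZversusDavenport} with $\D{}{3}{\Z_q}\ge q$, which is how the paper argues. The only substantive addition is that you explicitly verify $q\nmid\binom{j}{3}$ for $3\le j\le q-1$, a computation the paper leaves implicit.
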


Before giving the proof of this theorem, we need to remind the reader of some classical results.

For $m \geq 0$, the {\it elementary symmetric polynomial of degree $m$} is the sum of all distinct products of $m$ distinct variables.  Thus, $\Elem{0}{n}=1, \Elem{1}{n}=x_1+\cdots +x_n, \Elem{2}{n}=\sum_{1 \leq i < j \leq n}x_ix_j$ and, so on, until,  $\Elem{n}{n} =x_1x_2\ldots x_n$.  The {\it $m$-th power sum polynomial} is $\Pow{m}{n} = \sum_{i=1}^{n} x_i^m$.

We now state a historical set of relations between the elementary symmetric polynomials and the power sum polynomials.  These 17th-century relations are independently due to Albert Girard and Isaac Newton and known as the Girard-Newton formulae (or sometimes Newton's identities); for more about these identities, see \cite{BeraMukherjee20}.

\begin{thm}[Girard-Newton formulae]
For all $n \geq 1$ and $1 \leq m \leq n$, we have
\begin{equation}\label{eqn:girard}
m\Elem{m}{n} = \sum_{i=1}^{m}(-1)^{i-1}\Elem{m-i}{n}\Pow{i}{n}.
\end{equation}

\end{thm}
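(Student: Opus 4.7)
The plan is to prove the Girard-Newton formulae via a generating function identity obtained through (formal) logarithmic differentiation of the generating polynomial of the elementary symmetric polynomials. This is purely algebraic and works in the ring $\Z[x_1,\ldots,x_n][[t]]$ of formal power series, so no convergence issues arise.

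First, I would introduce $E(t) := \prod_{i=1}^{n}(1 + x_i t)$, and observe by expanding the product that $E(t) = \sum_{m=0}^{n} \elem{m}{x_1,\ldots,x_n}\, t^m$, with the convention $\elem{0}{\cdot}=1$. Taking a formal derivative, $E'(t) = \sum_{j=1}^{n} x_j \prod_{i \neq j}(1 + x_i t)$. Dividing by $E(t)$ formally gives
\[
\frac{E'(t)}{E(t)} \;=\; \sum_{j=1}^{n}\frac{x_j}{1 + x_j t} \;=\; \sum_{j=1}^{n}\sum_{k \geq 0} (-1)^{k} x_j^{k+1} t^{k} \;=\; \sum_{k \geq 1}(-1)^{k-1}\,\Pow{k}{n}\, t^{k-1}.
\]
Hence, clearing the denominator, I obtain the master identity $E'(t) = E(t)\cdot \sum_{k\geq 1}(-1)^{k-1}p_k t^{k-1}$ in $\Z[x_1,\ldots,x_n][[t]]$. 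Extracting the coefficient of $t^{m-1}$ from each side then gives, on the left, $m\, \elem{m}{x_1,\ldots,x_n}$, and on the right, $\sum_{i=1}^{m} (-1)^{i-1}\, \elem{m-i}{x_1,\ldots,x_n}\, \Pow{i}{n}$, which is exactly \eqref{eqn:girard}.

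The main subtlety, rather than a genuine obstacle, is justifying the step $E'(t)/E(t) = \sum_j x_j/(1+x_jt)$ without invoking analytic logarithms. I would handle this by avoiding logarithms altogether: establish the identity $E'(t) = E(t)\cdot \sum_{k\geq 1}(-1)^{k-1} p_k t^{k-1}$ directly, by writing $E'(t) = \sum_{j} x_j \cdot E(t) / (1 + x_j t)$ and expanding $1/(1+x_j t) = \sum_{k\geq 0}(-1)^k x_j^k t^k$ as a formal geometric series. Summing over $j$ collapses $\sum_j x_j^{k+1}$ to $p_{k+1}$, yielding precisely the master identity. Since both sides are polynomials in the $x_i$ with integer coefficients and the argument manipulates only formal power series, the resulting identity holds universally, and restricting to the coefficient of $t^{m-1}$ for $1 \leq m \leq n$ completes the proof.
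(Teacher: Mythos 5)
Your proof is correct and complete. Note that the paper itself offers no proof of this statement: it is quoted as a classical 17th-century result (with a pointer to the literature for background), so there is no argument in the paper to compare yours against. Your generating-function derivation is the standard one and all the steps check out: $E(t)=\prod_{i=1}^n(1+x_it)=\sum_{m=0}^n e_m t^m$, the product rule gives $E'(t)=\sum_j x_j\prod_{i\neq j}(1+x_it)=\sum_j x_j\,E(t)/(1+x_jt)$ where each quotient is genuinely a polynomial, and your device of expanding $1/(1+x_jt)$ as a formal geometric series in $\Z[x_1,\ldots,x_n][[t]]$ (where $1+x_jt$ is a unit) correctly avoids any appeal to logarithms or convergence. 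Summing over $j$ collapses $\sum_j x_j^{k+1}$ to $p_{k+1}$, and comparing the coefficient of $t^{m-1}$ on both sides of $E'(t)=E(t)\sum_{k\geq 1}(-1)^{k-1}p_kt^{k-1}$ yields exactly $me_m=\sum_{i=1}^m(-1)^{i-1}e_{m-i}p_i$ for $1\leq m\leq n$; the range of the sum is right because $e_{m-i}$ appears only for $0\leq m-i$, and $m\leq n$ guarantees all these elementary symmetric polynomials are the nonzero ones. This is exactly the form the paper uses later (Equations (2), (3), (4)), so your argument would serve as a legitimate self-contained proof were one to be included.
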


We may rewrite Equation \ref{eqn:girard} in a manner that is independent of the number of variables, that is, we may rewrite Equation \ref{eqn:girard} in the ring of symmetric functions as

\begin{equation}\label{eqn:girardsimple}
me_m =\sum_{i=1}^m (-1)^{i-1} e_{m-i}p_i.
\end{equation}

One may use the Girard-Newton formulae to recursively express elementary symmetric polynomials in terms of power sums as follows.

\begin{equation}\label{eqn:elementaryinpower}
e_m =(-1)^m \sum \prod_{i=1}^{m} \frac{(-p_i)^{j_i}}{j_i!i^{j_i}},
\end{equation}
where the sum extends over all solutions to $j_1+2j_2+\cdots+mj_m=m$ such that $ j_1, \ldots, j_m \geq 0$.  For example, we have $e_1=p_1, ~e_2=\frac{1}{2}p_1^2-\frac{1}{2}p_2, ~e_3=\frac{1}{6}p_1^3-\frac{1}{2}p_1p_2+\frac{1}{3}p_3,~e_4=\frac{1}{24}p_1^4-\frac{1}{4}p_1^2p_2+\frac{1}{8}p_2^2+\frac{1}{3}p_1p_3-\frac{1}{4}p_4$. If we multiply both sides of Equation \ref{eqn:elementaryinpower} by $m!$, then we obtain on the right side integer coefficients.

Notice that for Equation \ref{eqn:elementaryinpower}, each term in the sum of the right side is a product that contains at most $m$ distinct power sum polynomials.  For a fixed $m$ we call a set $T$ of power sum polynomials a {\it dominating set for $e_m$} if each term in the sum contains at least one member of $T$. Let $\dom{m}$ denote the size of the smallest dominating set.  For $m=1$, the only dominating set is $\{p_1\}$, and so $t(1)=1$.  For $m=2$, the only dominating set is $\{p_1, p_2\}$, and so $t(2)=2$.  For $m=3$, any dominating set must contain both $p_1$ and $p_3$ and $\{p_1,p_3\}$ is a dominating set, and so $t(3)=2$.  More generally, the following was determined previously.

\begin{lem}[\cite{CaroGirardSchmitt21}]
We have $\dom{m}=\frac{m+2}{2}$ when $m$ is even, $\dom{m}= \frac{m+1}{2}$ when $m$ is odd.
\end{lem}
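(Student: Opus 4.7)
The plan is to reformulate ``dominating set for $e_m$'' combinatorially and then obtain matching bounds via a short pairing argument. First I would observe that the terms appearing in Equation~\ref{eqn:elementaryinpower} are indexed by the nonnegative integer solutions to $j_1 + 2 j_2 + \cdots + m j_m = m$, that is, by the partitions of $m$; for such a term, the power sums that appear are exactly $\{p_i : j_i \geq 1\}$, i.e.\ the set of distinct parts of the associated partition. Hence a set $T \subseteq \{1, 2, \ldots, m\}$ is a dominating set for $e_m$ if and only if every partition of $m$ uses at least one part from $T$. Writing $T^c := \{1, \ldots, m\} \setminus T$, this is equivalent to saying $m$ cannot be written as a sum (with repetition allowed) of elements of $T^c$, so the problem reduces to maximizing $|T^c|$ subject to this non-representability condition.

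For the upper bound on $\dom{m}$, I would exhibit the explicit set $T^c := \{\lfloor m/2 \rfloor + 1, \ldots, m-1\}$: any single element of this set is strictly less than $m$, and any sum of two or more such elements strictly exceeds $m$, so $m$ is not representable from $T^c$. Counting gives $\dom{m} \leq m - |T^c| = 1 + \lfloor m/2 \rfloor$, which is $(m+2)/2$ when $m$ is even and $(m+1)/2$ when $m$ is odd.

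For the matching lower bound, the key observation is that for each pair $\{i, m-i\}$ with $1 \leq i < m/2$, at most one of the two elements may lie in $T^c$ (else $m = i + (m-i)$ is a representation); moreover $m \notin T^c$ always, and $m/2 \notin T^c$ when $m$ is even (since $m = (m/2) + (m/2)$). A direct count --- namely $k - 1$ pairs together with the forced exclusions $\{k, 2k\}$ when $m = 2k$, versus $k$ pairs together with the forced exclusion $\{m\}$ when $m = 2k+1$ --- then yields $|T^c| \leq \lfloor (m-1)/2 \rfloor$, giving the matching lower bound on $\dom{m}$. The main step requiring care is the initial translation from ``dominating set for $e_m$'' to the additive non-representability of $m$ by $T^c$; once that dictionary is in hand, both bounds follow immediately from the pair-sums-to-$m$ observation and there is no further obstacle.
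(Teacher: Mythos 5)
Your proof is correct, and it matches the approach behind the cited result: the paper itself only quotes this lemma from \cite{CaroGirardSchmitt21}, but it later identifies $\{p_1,\ldots,p_{\lfloor m/2\rfloor},p_m\}$ as the minimum dominating set, which is exactly the complement of your $T^c=\{\lfloor m/2\rfloor+1,\ldots,m-1\}$. Your translation to non-representability of $m$ as a sum of parts from $T^c$ and the pairing argument $\{i,m-i\}$ (with the forced exclusions of $m$, and of $m/2$ when $m$ is even) give exactly the matching lower bound.
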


We are now able to give the proof of Theorem \ref{thm:m=3case}

\begin{proof}
\begin{enumerate}
\item Let $\GG = \EGZ{k}{\Z_k^{t(m)}}{1}$ and let $S=(g_1, \ldots, g_{\GG})$ be a sequence over $\Z_k$.  Let $D(m):=\{d_1,d_2, \ldots , d_{t(m)}\}$ be the exponents of a dominating set of size $t(m)$.  Consider the following sequence over $\Z_k^{t(m)}$: $$([g_1^{d_1}, g_1^{d_2},\ldots, g_1^{d_{t(m)}}], \ldots ,[g_{\GG}^{d_1},g_{\GG}^{d_2}, \ldots, g_{\GG}^{d_{t(m)}}]).$$  By the definition of $\EGZ{k}{\Z_k^d}{1}$, we have that there is a subset $J \subset [\GG]$ such that $\sum_{j \in J}[g_j^{d_1}, g_j^{d_2}, \ldots, g_j^{d_{t(m)}}] = (0,\ldots, 0)$.  Thus, $\sum_{j \in J}g_j^{d_1} \equiv 0 \pmod k$, $\sum_{j \in J}g_j^{d_2} \equiv 0 \pmod k$, and all the way to $\sum_{j \in J}g_j^{d_{t(m)}} \equiv 0 \pmod k$.  Let $S'$ be the sequence selected by $J$. We use the fact that the Girard-Newton formulae allow us to express $m!e_m$ as a sum whose terms consist of power sum polynomials.  In this sum at least one factor in each term is equal to $0 \pmod k$ since through $D(m)$ we have created a dominating set.  Thus, the sum is $0 \pmod k$.  It follows that $e_m(S') \equiv 0 \pmod k$.

The ``in particular" statement follows by the application of Theorem \ref{thm:Reiher}.  The weaker $gcd$ condition follows from the fact that by Equation \ref{eqn:girardsimple} we have the expression $3e_3=e_2p_1-e_1p_2+p_3$ and as $e_1=p_1$ we may write $3e_3=e_2p_1-p_1p_2+p_3$.  The set $\{p_1,p_3\}$ is a dominating set for this expression.

\item By Theorem \ref{thm:EGZversusDavenport} if $t \in \s{k}{m}$, then $\EGZ{t}{k}{m} \geq \D{}{m}{\Z_k}+t-m$.  For $k \geq 4, k \in \s{k}{3}$ if and only if $k \not \equiv 0 \pmod 3$.  Thus, $\EGZ{k}{k}{3} \geq k+\D{}{3}{\Z_k}-3$.


\item The sequence $S=(1^{q-1})$ shows that $\D{}{3}{\Z_q} \geq q$.  Thus, $\EGZ{q}{q}{3} \geq q+\D{}{3}{\Z_q}-3=2q-3$.

\end{enumerate}

\end{proof}




\begin{prob}\label{prob:bounds?}
Determine a lower bound for $\EGZ{k}{k}{m}$ for $k \in \s{k}{m}$ and an upper bound for $\EGZ{k}{k}{m}$ for all $k,m$.
\end{prob}



\subsubsection{Exact determination for $\Z_2$}\label{subsection:Z2}

In order to prove a generalization of the Caro-Gao Theorem for the cyclic group of order $2$, we need some preparatory lemmas.  To prove these lemmas, we provide some necessary background.

Let $p$ be a prime number and $n > 1$ an integer.  The {\it $p$-adic valuation of $n$}, denoted $\nu_p(n)$, is the exponent of $p$ in the canonical decomposition in prime numbers of $n$ (and if $p$ does not divide $n$, then $\nu_p(n)=0$).  The base-$p$ expansion of $n$ is written as such, $n=a_kp^k+a_{k-1}p^{k-1}+\cdots +a_1p+a_0$.  




\begin{thm}[E.~Kummer, 1852 \cite{Kummer1852}]
The $p-$adic valuation of the binomial coefficient ${n \choose m}$ is equal to the number of `carry-overs' when performing the addition in base $p$ of $n-m$ and $m$. 
\end{thm}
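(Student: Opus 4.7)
The plan is to prove Kummer's theorem via Legendre's formula for the $p$-adic valuation of a factorial, and then to track how ``carries'' in base-$p$ addition affect digit sums. Let $s_p(n)$ denote the sum of the base-$p$ digits of $n$.

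First I would establish (or recall) Legendre's formula
$$\nu_p(n!) \;=\; \sum_{i=1}^{\infty} \left\lfloor \frac{n}{p^i} \right\rfloor \;=\; \frac{n - s_p(n)}{p-1}.$$
The first equality counts multiples of $p$, then of $p^2$, and so on, in the factorial product $1\cdot 2 \cdots n$; each integer $\leq n$ divisible by exactly $p^j$ is counted $j$ times. The second equality follows from writing $n = \sum_j a_j p^j$ in base $p$ and computing that $\lfloor n/p^i \rfloor = \sum_{j \geq i} a_j p^{j-i}$, then summing a geometric-type series.

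Next I would apply Legendre's formula to $\binom{n}{m} = n!/(m!\,(n-m)!)$ to obtain
$$\nu_p\!\binom{n}{m} \;=\; \frac{\,s_p(m) + s_p(n-m) - s_p(n)\,}{p-1}.$$
So the content of the theorem is that the numerator equals $(p-1)$ times the number of carries in the base-$p$ addition of $m$ and $n-m$.

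The main step is then the base-$p$ digit bookkeeping. Writing $m = \sum_j b_j p^j$ and $n-m = \sum_j c_j p^j$ and performing column-by-column addition with carries $\varepsilon_j \in \{0,1\}$, the digit of $n$ in position $j$ is $a_j = b_j + c_j + \varepsilon_{j-1} - p\,\varepsilon_j$ (with $\varepsilon_{-1}=0$). Summing over $j$ telescopes to
$$s_p(n) \;=\; s_p(m) + s_p(n-m) - (p-1)\sum_{j \geq 0} \varepsilon_j,$$
so $s_p(m) + s_p(n-m) - s_p(n) = (p-1)c$, where $c = \sum_j \varepsilon_j$ is exactly the number of carries. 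Combining with the displayed expression for $\nu_p\binom{n}{m}$ yields $\nu_p\binom{n}{m} = c$, which is Kummer's claim.

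The only real obstacle is making the digit-sum telescoping completely rigorous (ensuring that one correctly handles the final carry out of the top digit, and that the sums over $j$ are finite since eventually all digits and carries vanish). Once that is cleanly set up, everything else is mechanical. I would present Legendre's formula as a preliminary lemma so that the main proof reduces to the single carry-identity computation.
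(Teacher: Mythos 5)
Your proof is correct and complete: Legendre's formula gives $\nu_p\binom{n}{m}=\bigl(s_p(m)+s_p(n-m)-s_p(n)\bigr)/(p-1)$, and your column-by-column carry identity $a_j=b_j+c_j+\varepsilon_{j-1}-p\,\varepsilon_j$ telescopes exactly as claimed (the index shift is harmless since $\varepsilon_{-1}=0$ and all digits and carries vanish for large $j$). Note that the paper itself offers no proof of this statement --- it is quoted as a classical result of Kummer with a citation --- so there is nothing to compare against; what you have written is the standard textbook derivation and it is sound.
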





Recall Inequality \ref{eqn:L1sequence}: $\D{}{m}{\Z_n} \geq \Low{n}{m}$ and the discussion that precedes it. In particular, we have $\D{}{m}{\Z_2} \geq \Low{2}{m}$.  Notice that any sequence $S$ containing the element $0$ has a subsequence $S$ for which $e_m(S')=0$.  Any sequence $S$ not containing $0$ and of length $\Low{2}{m}$, we have $e_m(S) \equiv 0 \pmod 2$.

\begin{lem}\label{lem:mplus2^nu}
$\D{}{m}{\Z_2} = m+2^{\nu_2(m)}.$
\end{lem}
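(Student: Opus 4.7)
The plan is to reduce the problem to a question about binomial coefficients modulo $2$ and then invoke Kummer's theorem to identify the critical length. The key observation is that for any sequence $S'$ over $\Z_2$ of length $\ell$ containing exactly $k$ ones, every term in $e_m(S')$ is either $0$ or $1$, and the terms equal to $1$ are precisely those $m$-subsets consisting entirely of ones. Hence
\[
e_m(S') \equiv \binom{k}{m} \pmod{2}.
\]
In particular, if $|S'|=m$ and $S'$ contains at least one zero, then $k<m$ and $e_m(S')=0$ automatically.

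For the upper bound, let $S$ be a sequence over $\Z_2$ of length $|S|\geq m+2^{\nu_2(m)}$. If $S$ contains at least one $0$, then we may form a length-$m$ subsequence $S'$ including that $0$ (possible since $|S|\geq m$), and the observation above gives $e_m(S')=0$. Otherwise $S=\seq{1^n}$ for some $n\geq m+2^{\nu_2(m)}$, and it suffices to show that $\binom{m+2^{\nu_2(m)}}{m}\equiv 0\pmod 2$. This is exactly Kummer's theorem: writing $r=\nu_2(m)$, the base-$2$ addition of $m$ and $2^r$ involves a carry out of position $r$ (since bit $r$ of $m$ is the lowest set bit, hence equals $1$), so there is at least one carry and $\nu_2\!\left(\binom{m+2^r}{m}\right)\geq 1$.

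For the lower bound, I would take the extremal sequence $S=\seq{1^{m+2^{\nu_2(m)}-1}}$ and argue that for every $\ell$ with $m\leq \ell\leq m+2^{\nu_2(m)}-1$ we have $\binom{\ell}{m}\equiv 1\pmod 2$. Writing $\ell=m+j$ with $0\leq j\leq 2^{r}-1$ where $r=\nu_2(m)$, the base-$2$ digits of $j$ all lie in positions $<r$, while the base-$2$ digits of $m$ all lie in positions $\geq r$. Thus the addition of $j$ and $m$ produces no carry whatsoever, so Kummer's theorem gives $\nu_2\!\left(\binom{\ell}{m}\right)=0$. Equivalently, via Lucas' theorem, $m+j$ is a binary supermask of $m$ in this range, so $\binom{m+j}{m}$ is odd. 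Hence no subsequence of $S$ of length $\geq m$ has $e_m$ vanishing, which gives $\D{}{m}{\Z_2}\geq m+2^{\nu_2(m)}$.

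The main (and essentially only) technical step is the binary-carry analysis in the two paragraphs above. No obstacle is expected since the interaction of $m$ and $2^{\nu_2(m)}$ in base $2$ is very clean: adding any $j<2^{\nu_2(m)}$ to $m$ creates no carries, while adding $2^{\nu_2(m)}$ itself forces at least one. Together the two bounds yield $\D{}{m}{\Z_2}=m+2^{\nu_2(m)}$.
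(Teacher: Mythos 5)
Your proof is correct and takes essentially the same route as the paper's: reduce $e_m$ of a $0$--$1$ sequence to $\binom{k}{m} \pmod 2$, handle sequences containing a $0$ by a length-$m$ subsequence through that $0$, and settle the all-ones case via Kummer's theorem, noting that adding any $j < 2^{\nu_2(m)}$ to $m$ in base $2$ produces no carry while adding $2^{\nu_2(m)}$ forces one. No gaps.
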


\begin{proof}
Let $m$ be a positive integer with $2-$adic valuation $\nu_2(m)$.  

Notice that any sequence $S$ containing the element $0$ has a subsequence $S$ for which $e_m(S')=0$.  We will show that any sequence $S$ not containing $0$ and of length $m+2^{\nu_2(m)})$, that is $S=(1^{m+2^{\nu_2(m)}})$ we have $e_m(S) = {m+2^{\nu_2(m)} \choose m} \equiv 0 \pmod 2$ and sequences of shorter length do not have this property.

We begin by showing that for $m+1 \leq j < m+2^{\nu_2(m)}$ we have ${j \choose m} \not \equiv 0 \pmod 2$.  


We will use Kummer's Theorem to compute $2$-adic valuation of this binomial coefficient ${j \choose m}$: it is equal to the number of `carry-overs' when performing the addition in base $2$ of $j-m$ and $m$.  Begin by noticing that base-2 expansion of $m$ has $0'$s to the right of position $\nu_{2}(m)$ (and a $1$ in position $\nu_2(m)$).  That is, the base-$2$ expansion of $m$ ends with $0\cdot2^{\nu_2(m)-1}+\cdots + 0\cdot2^0$.  As $m+1 \leq j < m+2^{\nu_2(m)}$, we have $1 \leq j-m < 2^{\nu_2(m)}$.  Thus, the base-$2$ expansion does not have a $1$ to the left of position $\nu_2(m)$.  Thus, when we add $m$ and $j-m$ in base-$2$, there are no `carry-overs'.  By Kummer's Theorem, the $2$-adic valuation of  ${j \choose m}$ is $0$.  That is, $2$ does not divide ${j \choose m}$ for $m+1 \leq j < m+2^{\nu_2(m)}$.

Now consider $j=m+2^{\nu_2(m)}$.  In this case, $j-m=2^{\nu_2(m)}$.  Thus, the base-$2$ expansion of $j-m$ is $1\cdot2^{\nu_2(m)}$.  As $m$ has a $1$ in position $\nu_2(m)$, when we add $m$ and $j-m$ in base-$2$ there is at least one `carry-over'.  Thus, some positive power of $2$ divides ${m+2^{\nu_2(m)} \choose m}$.
\end{proof}

\begin{lem}\label{lem:intervalhaswhatweneed}
Suppose that $i \geq m+2^{\nu_2(m)}$.  Then for some $j$ in the interval $[i-2^{\nu_2(m)}, \ldots, i]$, we have ${j \choose m} \equiv 0 \pmod 2$.  
\end{lem}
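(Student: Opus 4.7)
The plan is to use Kummer's theorem to reduce the claim to a simple combinatorial statement about base-$2$ bit patterns in an interval of length $2^{\nu_2(m)}+1$. Write $\nu = \nu_2(m)$. By the definition of $\nu$, the base-$2$ digit of $m$ in position $\nu$ (the lowest nonzero bit of $m$) is $1$. Kummer's theorem tells us that the $2$-adic valuation of ${j \choose m}$ equals the number of carries in the base-$2$ addition of $m$ and $j-m$; in particular, ${j \choose m}$ is even if and only if that addition produces at least one carry. So it suffices to produce some $j \in [i-2^\nu, i]$ for which $j-m$ also has a $1$ in position $\nu$, since $1+1$ in that column forces a carry.

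To locate such a $j$, I would substitute $n = j - m$. As $j$ ranges over $[i-2^\nu, i]$, the variable $n$ ranges over $[i - m - 2^\nu,\, i - m]$, a set of $2^\nu + 1$ consecutive integers that are all nonnegative by the hypothesis $i \geq m + 2^\nu$. The bit of $n$ in position $\nu$ is $\lfloor n/2^\nu \rfloor \bmod 2$; this is constant on the dyadic blocks $[k \cdot 2^\nu,\, (k+1)\cdot 2^\nu - 1]$ and alternates between consecutive blocks. An interval of $2^\nu + 1$ consecutive integers cannot lie inside a single block of length $2^\nu$, so it must meet two consecutive blocks and therefore contains some $n$ whose $\nu$-th bit equals $1$. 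For the corresponding $j = n+m$, the base-$2$ addition of $m$ and $j-m$ carries in column $\nu$, and Kummer's theorem then gives ${j \choose m} \equiv 0 \pmod 2$.

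I do not expect a real obstacle in this argument; the only point that needs care is the elementary pigeonhole observation that an interval of $2^\nu+1$ consecutive integers cannot fit inside a single block of constant $\nu$-th bit. The boundary issue is trivial as well: because $j \geq i - 2^\nu \geq m$ by hypothesis, we have $n = j - m \geq 0$ throughout the range, which is precisely what is needed to apply Kummer's theorem.
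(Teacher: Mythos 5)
Your proof is correct and takes essentially the same route as the paper: Kummer's theorem reduces the claim to forcing a carry in column $\nu_2(m)$, and a pigeonhole over the $2^{\nu_2(m)}+1$ consecutive values produces an element whose bit in that position is $1$. The only cosmetic difference is that you run the pigeonhole on $n=j-m$ directly, whereas the paper locates a $j$ with a $0$ in position $\nu_2(m)$ and then deduces the corresponding bit of $j-m$; your version handles the boundary case $n=j-m$ at least as cleanly.
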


\begin{proof}
As the interval has length $2^{\nu_2(m)}+1$, there exists some integer $j$ with a $0$ in position $\nu_2(m)$ of its base-$2$ expansion and $j-m \geq 2^{\nu_2(m)}$.  As $m$ has a $1$ in position $\nu_2(m)$, $j-m$ has $1$ in position $\nu_2(m)$ of the base-$2$ expansion.  Thus, when we add $m$ and $j-m$ in base-$2$ there is at least one `carry-over'.  Thus, by Kummer's Theorem, some positive power of $2$ divides ${j \choose m}$.
\end{proof}

The next Theorem is showing that a generalized Caro-Gao Theorem holds in a particular case.

\begin{thm}\label{thm:generalizedGaok=2}
For $t \in \s{2}{m}$, we have $\EGZ{t}{2}{m} = t+2^{\nu_2(m)}=t+\D{}{m}{\Z_2}-m.$

\end{thm}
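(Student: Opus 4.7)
The plan is to combine the lower bound already in hand with a three-case analysis for the upper bound. For the lower bound, Theorem~\ref{thm:EGZversusDavenport} together with Lemma~\ref{lem:mplus2^nu} yields $\EGZ{t}{2}{m} \geq t + \D{}{m}{\Z_2} - m = t + 2^{\nu_2(m)}$ with no further work. The remaining task is therefore to show that every sequence $S$ over $\Z_2$ of length $n := t + 2^{\nu_2(m)}$ admits a length-$t$ subsequence $S'$ with $e_m(S') \equiv 0 \pmod 2$.

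The key reduction is that, over $\Z_2$, a length-$t$ subsequence $S'$ containing exactly $j$ ones satisfies $e_m(S') \equiv \binom{j}{m} \pmod 2$, since any product in $e_m$ that picks a zero vanishes. Writing $z$ and $o$ for the multiplicities of $0$ and $1$ in $S$, the admissible values of $j$ span the interval $[\max(0, t-z),\, \min(o, t)]$, and I need to exhibit a $j$ in this range for which either $j < m$ or $\binom{j}{m} \equiv 0 \pmod 2$.

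I would then split into three cases according to $z$. If $z \leq 2^{\nu_2(m)}$, then $o \geq t$, so I can take $j = t$; the assumption $t \in \s{2}{m}$ forces $\binom{t}{m}$ to be even. If $z \geq t - m + 1$, then $\max(0, t-z) \leq m-1$, so any $j < m$ in the admissible range trivially gives $e_m(S') = 0$. The interesting range is $2^{\nu_2(m)} < z \leq t - m$: here the admissible interval is precisely $[t-z,\, o]$ of width $o - (t-z) + 1 = 2^{\nu_2(m)} + 1$, with upper endpoint $o = t + 2^{\nu_2(m)} - z \geq m + 2^{\nu_2(m)}$. These are exactly the hypotheses of Lemma~\ref{lem:intervalhaswhatweneed} (applied with $i = o$), which supplies a $j$ in this interval with $\binom{j}{m} \equiv 0 \pmod 2$.

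The main obstacle is calibrating the case boundaries so that the three ranges of $z$ tile the full interval $[0, n]$ without gaps and each case fits cleanly into a prepared tool. The middle case is the one for which Lemma~\ref{lem:intervalhaswhatweneed} was tailor-made: the interval width $2^{\nu_2(m)} + 1$ is the Kummer-theoretic spacing between consecutive binomial coefficients $\binom{j}{m}$ that pick up a carry in position $\nu_2(m)$. Once this bookkeeping is verified, the proof consists essentially of invoking Lemma~\ref{lem:mplus2^nu} for the lower bound and Lemma~\ref{lem:intervalhaswhatweneed} for the decisive case of the upper bound.
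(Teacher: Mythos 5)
Your proposal is correct and follows essentially the same route as the paper: the lower bound via Theorem~\ref{thm:EGZversusDavenport} and Lemma~\ref{lem:mplus2^nu}, and the upper bound by reducing $e_m(S')$ to $\binom{j}{m} \bmod 2$ and splitting on the composition of $S$, with Lemma~\ref{lem:intervalhaswhatweneed} handling the decisive middle range. The only cosmetic difference is that you parametrize by the number of zeros and merge into three cases what the paper treats as four cases on the number of ones.
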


\begin{proof}
By Lemma \ref{lem:mplus2^nu}, we only need show $\EGZ{t}{2}{m} = t+2^{\nu_2(m)}$.

By Theorem \ref{thm:EGZversusDavenport}, we have $\EGZ{t}{2}{m} \geq t+2^{\nu_2(m)}.$

We now prove the upper bound.  Let $S=(0^{t+2^{\nu_2(m)}-i}, 1^i)$ be a sequence of length $t+2^{\nu_2(m)}$, where $0 \leq i \leq t+2^{\nu_2(m)}$.  We consider several cases, in each case showing that there exists a subsequence $S'$ of length $t$ such that $e_m(S') \equiv 0 \pmod 2$.

\begin{enumerate}
\item $i \geq t \geq m+1$. \\
There exists the subsequence $S'=\seq{1^t}$.  As $t \in \s{2}{m}$, we have $e_m(S')={t \choose 2} \equiv 0 \pmod 2$.

\item $i \leq m-1$.\\
Then the number of $0'$s that $S$ contains is at least $t+2^{\nu_2(m)}-i \geq t+1-i > t-i$.  Let $S'=\seq{0^{t-i},1^i}$.  As each summand in $e_m(S')$ equals $0$, we have $e_m(S') \equiv 0 \pmod 2$.

\item $m \leq i \leq m+2^{\nu_2(m)}-1$.\\
Then the number of $0'$s that $S$ contains is at least $t+2^{\nu_2(m)}-i \geq t-m+1$.  Let $S'=\seq{0^{t-m+1},1^{m-1}}$.  As each summand in $e_m(S')$ equals $0$, we have $e_m(S') \equiv 0 \pmod 2$.

\item $m+2^{\nu_2(m)} \leq i \leq t-1$.\\
Then the number of $0'$s that $S$ contains is at least $t+2^{\nu_2(m)}-i \geq 2 ^{\nu_2(m)}+1$.  By Lemma \ref{lem:intervalhaswhatweneed}, there exists a $j \in [i-2^{\nu_2(m)},\ldots, i]$ such that ${i-j \choose m} \equiv 0 \pmod 2$.  Let $S'=\seq{0^{t-(i-j)},1^{(i-j)}}$.  Each summand in $e_m(S')$ is either $0$ or $1$, and the number of the latter is ${i-j \choose m}$.  Thus, $e_m(S') = {i-j \choose m} \equiv 0 \pmod 2$.
\end{enumerate}
\end{proof}

\subsection{Prime power parameters yield a Caro-Gao-type theorem}\label{subsection:primepower}

In this subsection, we investigate the $\EGZ{t}{k}{m}$ when the parameters are restricted to being powers of the same prime.  Central to establishing our results here and later is the use of a tool from the polynomial method tool-kit, as follows.

\begin{thm}[U.~Schauz \cite{Schauz08}, D.~Brink \cite{Brink11}]
\label{RVSCHANUEL}
\label{thm:BIGBRINKTHM}
Let $P_1(t_1,\ldots,t_n),\ldots,P_r(t_1,\ldots,t_n) \in \Z[t_1,\ldots,t_n]$ be 
polynomials, let $p$ be a prime, let $v_1,\ldots,v_r \in \Z^+$, and 
let $A_1,\ldots,A_n$ be nonempty subsets of $\Z$ such that for each $i$, 
the elements of $A_i$ are pairwise incongruent modulo $p$, and put 
$A = \prod_{i=1}^n A_i$. 
 Let 
\[ Z_A = \{x \in A \mid P_j(x) \equiv 0 \pmod{p^{v_j}} \ \forall 1 \leq j \leq r \}, \ \zz_A = \# Z_A. \]
a) If $\sum_{j=1}^r (p^{v_j}-1)\deg(P_j) < \sum_{i=1}^n \left( \#A_i - 1 \right)$, then $\zz_A \neq 1$. \\
b) (\text{Boolean Case}) If $A = \{0,1\}^n$ and $\sum_{j=1}^r (p^{v_j}-1)\deg(P_j) < n$, then $\zz_A \neq 1$. 
\end{thm}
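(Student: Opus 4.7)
The plan is to establish part (a); part (b) then follows immediately by specializing to $A_i = \{0,1\}$ for each $i$ (since $\sum_i(\#A_i - 1) = n$ and $0,1$ are distinct modulo every prime). The strategy is the polynomial method: construct a single polynomial whose mod-$p$ values serve as an indicator for $Z_A$, then force its weighted sum over $A$ to vanish on degree grounds, contradicting $|Z_A| = 1$.

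The linchpin, and the main obstacle, is the existence, for each $v \in \Z^+$, of an integer polynomial $N_v(X)$ of degree exactly $p^v - 1$ satisfying
\[
N_v(a) \equiv 1 \pmod p \text{ if } a \equiv 0 \pmod{p^v}, \qquad N_v(a) \equiv 0 \pmod p \text{ otherwise,}
\]
for every $a \in \Z$. For $v = 1$, Fermat's little theorem gives $N_1(X) = 1 - X^{p-1}$. The case $v \geq 2$ is the essential new input due to Brink and Schauz; one concrete choice is $N_v(X) = \binom{X + p^v - 1}{p^v - 1}$, whose correctness on non-negative integers follows from Lucas's theorem applied to the base-$p$ digits, and extends to all of $\Z$ by the periodicity modulo $p$ of the reduction. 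This is precisely what strengthens the classical Chevalley--Warning framework (where only the $v = 1$ indicator is needed) to arbitrary prime-power moduli.

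Granted such indicators, set
\[
Q(t_1,\ldots,t_n) := \prod_{j=1}^r N_{v_j}\bigl(P_j(t_1,\ldots,t_n)\bigr) \in \Z[t_1,\ldots,t_n],
\]
so that $Q(x) \equiv 1 \pmod p$ for $x \in Z_A$ and $Q(x) \equiv 0 \pmod p$ otherwise, while $\deg Q \leq \sum_{j=1}^r (p^{v_j}-1)\deg(P_j)$. Define the weighted sum
\[
W(f) := \sum_{x \in A} f(x) \prod_{i=1}^n \prod_{\substack{a \in A_i \\ a \neq x_i}}(x_i - a)^{-1} \pmod p,
\]
which is well-defined since each $A_i$ has elements pairwise incongruent mod $p$. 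A monomial calculation via Lagrange interpolation---each one-variable factor $\sum_{x_i \in A_i} x_i^{\alpha_i}\prod_{a \neq x_i}(x_i-a)^{-1}$ vanishes mod $p$ whenever $\alpha_i < \#A_i - 1$---shows that $W(f) \equiv 0 \pmod p$ for every $f$ with $\deg f < \sum_i(\#A_i - 1)$.

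Applying this with $f = Q$, the degree hypothesis of (a) yields $W(Q) \equiv 0 \pmod p$. On the other hand, by the indicator property,
\[
W(Q) \equiv \sum_{x \in Z_A}\,\prod_{i=1}^n \prod_{\substack{a \in A_i \\ a \neq x_i}}(x_i - a)^{-1} \pmod p,
\]
which, if $Z_A = \{x^*\}$ were a singleton, would reduce to a single product of units mod $p$ and hence be nonzero---a contradiction. Therefore $\#Z_A \neq 1$, proving (a), and (b) follows as the specialization $A_i = \{0,1\}$.
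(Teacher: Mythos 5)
The paper does not actually prove this theorem: it is imported as a black box from Schauz and Brink, so there is no internal proof to compare against. Your reconstruction follows the route of Brink's original argument --- a prime-power indicator polynomial composed with each $P_j$, the product fed into a weighted-sum (Lagrange-interpolation) functional that annihilates low-degree polynomials, and a singleton contradiction --- and the architecture is sound. In particular, the Lucas-theorem verification that $N_v(X)=\binom{X+p^v-1}{p^v-1}$ has the stated indicator property on non-negative integers, the extension to all of $\Z$ by periodicity of the mod-$p$ reduction of an integer-valued polynomial, the degree count $\deg Q\le\sum_j(p^{v_j}-1)\deg P_j$, and the observation that a singleton $Z_A$ would make $W(Q)$ a product of units are all correct, as is the specialization giving part (b).

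The one step that does not work as written is the passage from the monomial computation to the conclusion $W(Q)\equiv 0\pmod p$. Your one-variable claim (that $\sum_{x_i\in A_i}x_i^{\alpha_i}\prod_{a\ne x_i}(x_i-a)^{-1}$ vanishes mod $p$ when $\alpha_i<\#A_i-1$) is fine, but you then extend it to $Q$ by linearity over its monomial expansion, and $Q$ does not have integer, or even $p$-integral, coefficients: for $v\ge 2$ the denominator $(p^v-1)!$ of $N_v$ is divisible by $p$, since by Legendre's formula $\nu_p\bigl((p^v-1)!\bigr)=\frac{p^v-1-v(p-1)}{p-1}>0$. So one cannot write $Q=\sum_\alpha c_\alpha t^\alpha$ and multiply the $\Z/p\Z$-valued quantities $W(t^\alpha)$ by the rational numbers $c_\alpha$. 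The repair is standard but must be stated: $Q$ is integer-valued, hence lies in the $\Z$-span of the products $\prod_i\binom{t_i}{k_i}$ with $\sum_i k_i\le\deg Q$; the functional $W$ factors over such products into one-variable sums; and whenever $\sum_i k_i<\sum_i(\#A_i-1)$ some index has $k_i<\#A_i-1\le p-1$, for which $k_i!$ is a unit mod $p$, so $\binom{X}{k_i}$ reduces to a genuine $\mathbb{F}_p$-polynomial of degree $k_i<\#A_i-1$ and the corresponding one-variable factor vanishes by your interpolation argument. With the monomial basis replaced by this binomial basis, your proof closes.
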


When we apply Theorem \ref{thm:BIGBRINKTHM}, most notably the Boolean Case, we will use a system of polynomials to encode the combinatorial problem in the zero set of this system.  When applied, the Boolean Case will guarantee the existence of a non-zero boolean vector in the zero set.  If the $j^{th}$-entry of this vector is $1$, then this will correspond to selecting the $j^{th}$-entry of a given sequence $S$ whereas $0$ corresponds to not selecting this entry.  In the proofs that make use of this theorem, we will write it so that the first polynomial (or set of polynomials) that we give will ensure that we pick out a subsequence that sums to the zero-element and the second polynomial that we give will ensure that the subsequence that we pick out is of the desired length.  We note that Theorem \ref{thm:BIGBRINKTHM} has been generalized (see \cite{ClarkForrowSchmitt17}), though the statement given here is sufficient for our purposes.

To warm the reader to the employment of this method, we begin with an example.

\begin{exa}
We compute $\EGZ{16}{8}{2}$.  First, note that $16 \in \s{8}{2}$. Let $g_1, \ldots ,g_{30}$ be integers.

Let $P = \displaystyle\sum_{1 \le i < \dots < j \le 30}  g_ig_jx_ix_j$ and $Q = \displaystyle\sum_{1 \le i \le 30} x_i$. We seek a particular type of member of the set of shared zeros of $P \equiv 0 \pmod{2^3}, Q \equiv 0 \pmod{2^4}$.  We use the Boolean Case of Theorem \ref{thm:BIGBRINKTHM}.  First note that the zero-vector is a shared zero of this polynomial system.  Note that the hypothesis of Theorem \ref{thm:BIGBRINKTHM} is satisfied, that is, we have $(2^3-1)deg(P)+(2^4-1)deg(Q)=7*2+15*1=29 < 30$.  Thus, there exists a shared zero other than the zero-vector.  This boolean vector of length $30$ must have precisely $16$ $1'$s in it as guaranteed by $Q \equiv 0 \pmod {2^4}$.  These $1'$s select a subsequence $S'$ of the above list of integers of length 16 such that $e_2(S') \equiv 0 \pmod{8}$.  Thus, $\EGZ{16}{8}{2} \leq 30$.

We now show that $\EGZ{16}{8}{2} >29$.  Consider the following sequence $S=(0^{14},1^{15})$.  We show there is no subsequence $S'$ of length 16 such that $e_2(S') \equiv 0\pmod{8}$.  Let $x$ count the number of $1'$s in any subsequence $S'$.  Then $e_2(S') \equiv {x \choose 2} \pmod{8}$.  However, as we have $2 \leq x \leq 15$, ${x \choose 2} \not \equiv 0 \pmod{8}$.

Thus, $\EGZ{16}{8}{2}=30$.
\end{exa}

Notice that Part 3 of the following theorem is a Caro-Gao-type statement.

\begin{thm}\label{thm:EGZprimepowers}
\begin{enumerate}
\item Let $r$ and $s$ be positive integers with $r  \geq s$, $p$ a prime, $m \geq 1$ and $p^{r} > mp^{s}-m$. We have $\EGZ{p^{r}}{p^s}{m} \leq p^{r}+mp^s-m.$
\item Let $t \in \s{p^s}{p^u}$.  Then $\EGZ{t}{p^s}{p^u} \geq t+p^{s+u}-p^u$.  Furthermore, if $t=p^r$ where $r > u$, then $\EGZ{p^r}{p^s}{p^u} \geq p^r+p^{s+u}-p^u$.
\item Let $r, s, u$ be positive integers with $r \geq s+u$.  Then $\EGZ{p^r}{p^s}{p^u}=p^r+p^{s+u}-p^u$.

\end{enumerate}
\end{thm}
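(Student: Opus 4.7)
The plan is to handle the three parts in sequence, with Part 3 falling out as a direct combination of Parts 1 and 2 once the numerical hypotheses are verified; thus the two substantive tasks are the polynomial-method upper bound in Part 1 and the extremal construction for the lower bound in Part 2.

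For Part 1, I would apply the Boolean case of the Schauz-Brink theorem (Theorem~\ref{thm:BIGBRINKTHM}) in direct imitation of the worked example preceding the theorem. Given a sequence $(g_1,\ldots,g_N)$ over $\Z_{p^s}$ of length $N = p^r + mp^s - m$, the natural system to encode our problem consists of
\[
P(\mathbf{x}) = \sum_{1 \le i_1 < \cdots < i_m \le N} g_{i_1}\cdots g_{i_m}\,x_{i_1}\cdots x_{i_m}
\quad\text{and}\quad
Q(\mathbf{x}) = \sum_{i=1}^N x_i,
\]
required to vanish modulo $p^s$ and $p^r$ respectively. The Schauz-Brink hypothesis $(p^s-1)\deg(P) + (p^r-1)\deg(Q) < N$ collapses to $-1 < 0$, so a Boolean shared zero beyond the trivial one must exist. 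The role of the hypothesis $p^r > mp^s - m$ is precisely that it is equivalent to $2p^r > N$, which forces any nonzero Boolean vector of weight a positive multiple of $p^r$ to have weight exactly $p^r$; its support then selects the desired length-$p^r$ subsequence $S'$ with $e_m(S') \equiv 0 \pmod{p^s}$.

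For Part 2, the first assertion is immediate from stitching together Inequality~\ref{eqn:L1sequence}, Proposition~\ref{prop:Low}, and Theorem~\ref{thm:EGZversusDavenport}: one has $\D{}{p^u}{\Z_{p^s}} \geq \Low{p^s}{p^u} = p^{s+u}$, and plugging into the general lower bound yields $\EGZ{t}{p^s}{p^u} \geq t + p^{s+u} - p^u$ whenever $t \in \s{p^s}{p^u}$. Concretely, the witness sequence is $(0^{t-p^u}, 1^{p^{s+u}-1})$; any length-$t$ subsequence uses between $p^u$ and $p^{s+u}-1$ ones and evaluates under $e_{p^u}$ to $\binom{j}{p^u}$, which is nonzero modulo $p^s$ by the definition of $\Low{p^s}{p^u}$. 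For the ``furthermore'' clause with $t = p^r$ and $r > u$, Kummer's theorem gives $\nu_p\!\binom{p^r}{p^u} = r - u$, so either $r \geq s+u$ (in which case $t \in \s{p^s}{p^u}$ and the previous argument applies) or $u < r < s+u$ (in which case $\binom{p^r}{p^u} \not\equiv 0 \pmod{p^s}$, the sequence $(1^N)$ contains no vanishing length-$p^r$ subsequence for any $N$, hence $\EGZ{p^r}{p^s}{p^u} = \infty$ and the bound is vacuous).

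Part 3 is then just bookkeeping: the hypothesis $r \geq s+u$ yields both $p^r \in \s{p^s}{p^u}$ (by the Kummer computation above) and $p^r \geq p^{s+u} > p^{s+u} - p^u = p^u p^s - p^u$, so Part 2 supplies the lower bound and Part 1, applied with $m = p^u$, supplies the matching upper bound. I expect the only genuinely subtle point to be the exactness step in Part 1, namely exploiting that $p^r > mp^s - m$ is precisely the threshold ruling out a Boolean solution of weight $2p^r$ or more; the remaining steps are arithmetic checks.
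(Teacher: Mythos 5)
Your proposal is correct and follows essentially the same route as the paper: the Schauz--Brink Boolean case with the pair $P,Q$ and the observation that $p^r>mp^s-m$ forces the nonzero solution to have weight exactly $p^r$ for Part 1, the general lower bound of Theorem~\ref{thm:EGZversusDavenport} combined with $\D{}{p^u}{\Z_{p^s}}=p^{s+u}$ for Part 2, and the combination of the two for Part 3. Your handling of the ``furthermore'' clause (splitting off the case $u<r<s+u$, where the constant is infinite by Kummer's theorem) is in fact slightly more careful than the paper's terse remark, but the substance is identical.
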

\begin{proof}
\begin{enumerate}
\item Let $S=(g_1, \ldots, g_{p^{r}+mp^s-m})$ be a sequence over $\Z_{p^s}$.
Let $$P = \displaystyle\sum_{1 \le i_1 < \dots < i_m \le p^{r}+mp^s-m}  g_{i_1}\cdots g_{i_m}x_{i_1}\cdots x_{i_m},$$  $$Q = \displaystyle\sum_{1 \le i \le p^{r}+mp^s-m} x_i.$$ We seek a particular type of member of the set of shared zeros of $P \equiv 0 \pmod{p^s}, Q \equiv 0 \pmod{p^{r}}$.  We use the Boolean Case of Theorem \ref{thm:BIGBRINKTHM}.

First note that the zero-vector is a shared zero of this polynomial system.  Note that the hypothesis of Theorem \ref{thm:BIGBRINKTHM} is satisfied, that is, we have $(p^s – 1)deg(P) +(p^{r} – 1)deg(Q) =  p^{r} + mp^s  - (m+1)   <  p^{r}+mp^s-m$.  Thus, there exists a shared zero other than the zero-vector.  This boolean vector of length $p^{r}+mp^s-m$ must have precisely $p^{r}$ $1'$s in it as  $Q \equiv 0 \pmod {p^{r}}$ and by hypothesis $p^{r}+mp^s-m < 2p^{r}$.  These $1'$s select a subsequence $S'$ of the sequence $S$ such that $e_m(S')\equiv 0 \pmod{p^s}$.  Thus, $\EGZ{p^{r}}{p^s}{m} \leq p^{r}+mp^s-m$.

\item By Theorem \ref{thm:EGZversusDavenport}, if $t \in \s{k}{m}$, then $\EGZ{t}{k}{m} \geq t+\D{}{m}{\Z_k}-m$.  By a result in \cite{CaroGirardSchmitt21} (see Theorem 2.7 of that paper), we know that $\D{}{p^u}{\Z_{p^s}} = p^{s+u}$.  Hence $\EGZ{t}{p^s}{p^u} \geq t+p^{s+u}-p^u$.  In the case that $t=p^r$ and as $t \geq m+1$, we then know that $r >u$.  As a result, $\EGZ{p^r}{p^s}{p^u} \geq p^r+p^{s+u}-p^u$

\item With $m=p^u$ and $r \geq s+u$, we have $p^r \geq p^{s+u} > p^u(p^{s-1})$.  By Part 1, we have $\EGZ{p^{r}}{p^s}{p^u} \leq p^{r}+p^up^s-p^u.$  By Part 2, we infer that equality holds.

\end{enumerate}
\end{proof}

\subsubsection{Results for $p$-groups}

\begin{thm}\label{thm:EGZforp-group}

Let $p$ be a prime.  Let $G=\pgroup{r}$ be a $p-$group of rank $r$.  Let $h=\sum_{i=1}^r {\alpha_i}$.
\begin{enumerate}
\item Suppose that $p^h> \sum_{j=1}^rm(p^{\alpha_j}-1)$.  Then $\EGZ{p^h}{\pgroup{r}}{m} \leq   p^h+m\sum_{j=1}^r(p^{\alpha_j}-1)$.
\item $\EGZ{t}{\pgroup{r}}{p^s} \geq t+p^s\sum_{j=1}^{r}(p^{\alpha_j}-1).$
\item Suppose that $p^h> \sum_{j=1}^rp^s(p^{\alpha_j}-1)$.  Then $\EGZ{p^h}{\pgroup{r}}{p^s} =   p^h+p^s\sum_{j=1}^r(p^{\alpha_j}-1)$.
\end{enumerate}
\end{thm}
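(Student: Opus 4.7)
The plan is to prove each of the three parts separately, following exactly the template of Theorem \ref{thm:EGZprimepowers}; Parts 1 and 2 then combine to give Part 3.

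For Part 1, I would use the Boolean Case of Theorem \ref{thm:BIGBRINKTHM} (the polynomial method).  Let $S=(g_1,\ldots,g_N)$ be a sequence over $\pgroup{r}$ of length $N=p^h+m\sum_{j=1}^r(p^{\alpha_j}-1)$, and write each $g_i=(g_i^{(1)},\ldots,g_i^{(r)})$ with $g_i^{(j)}\in\Z_{p^{\alpha_j}}$.  For each $j\in\{1,\ldots,r\}$ form the degree-$m$ polynomial
$$P_j(x_1,\ldots,x_N)=\sum_{1\le i_1<\cdots<i_m\le N}g_{i_1}^{(j)}\cdots g_{i_m}^{(j)}\,x_{i_1}\cdots x_{i_m},$$
and let $Q(x_1,\ldots,x_N)=\sum_{i=1}^N x_i$.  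I seek a non-zero Boolean common zero of $\{P_j\equiv 0\pmod{p^{\alpha_j}}\}_{j=1}^r$ together with $Q\equiv 0\pmod{p^h}$.  The weighted degree total is $m\sum_j(p^{\alpha_j}-1)+(p^h-1)=N-1<N$, so Theorem \ref{thm:BIGBRINKTHM}(b) supplies such a solution.  The size hypothesis $p^h>m\sum_j(p^{\alpha_j}-1)$ forces $N<2p^h$, so the Hamming weight of the solution is exactly $p^h$, and its support picks out a length-$p^h$ subsequence $S'$ with $e_m(S')=0$ in every coordinate of $\pgroup{r}$.

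For Part 2, I would apply Theorem \ref{thm:EGZversusDavenport}, yielding $\EGZ{t}{\pgroup{r}}{p^s}\ge t+\D{}{p^s}{\pgroup{r}}-p^s$, and then invoke the lower bound $\D{}{p^s}{\pgroup{r}}\ge p^s+p^s\sum_j(p^{\alpha_j}-1)$ from the authors' earlier work \cite{CaroGirardSchmitt21}, in direct analogy with the way Part 2 of Theorem \ref{thm:EGZprimepowers} uses the cyclic value $\D{}{p^u}{\Z_{p^s}}=p^{s+u}$.  Part 3 is then immediate: the hypothesis of Part 3 is the hypothesis of Part 1 specialized to $m=p^s$, so Part 1 delivers the upper bound $p^h+p^s\sum_j(p^{\alpha_j}-1)$, and Part 2 applied at $t=p^h$ furnishes the matching lower bound.

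The main obstacle is the weight-accounting step in Part 1.  One needs both the degree inequality $p^h-1+m\sum_j(p^{\alpha_j}-1)<N$ and the separation $N<2p^h$; the former is forced by the very choice of $N$, while the latter is precisely the hypothesis of Part 1 and is what guarantees that the Boolean solution has weight exactly $p^h$ rather than some larger multiple of $p^h$.  Part 2, meanwhile, is bookkeeping once the requisite $p$-group Davenport bound is at hand; producing that bound from scratch would require an explicit extremal sequence for $\pgroup{r}$, which is more delicate than in the cyclic case since simple zero-padding is obstructed---any length-$p^s$ subsequence whose non-zero terms spread across multiple basis directions with each $b_j<p^s$ has $e_{p^s}=0$ automatically.
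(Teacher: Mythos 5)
Your proposal is correct and follows essentially the same route as the paper: the same Schauz--Brink Boolean-case argument with the coordinatewise degree-$m$ polynomials $P_j$ and the counting polynomial $Q$ for Part 1 (including the $N<2p^h$ weight argument), the same appeal to Theorem \ref{thm:EGZversusDavenport} together with the value of $\D{}{p^s}{\pgroup{r}}$ from \cite{CaroGirardSchmitt21} for Part 2, and the same combination for Part 3.
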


\begin{proof}
\begin{enumerate}
\item Let $\omega = p^h+\sum_{j=1}^rm(p^{\alpha_j}-1)$.
Let $S=(g_1, \ldots, g_{\omega})$ be a sequence over $\pgroup{r}$, where $g_i=(a^{(1)}_i,\ldots,a^{(r)}_i)$.
For each $1 \leq j \leq r$, 

$$P_j = \displaystyle\sum_{1 \le i_1 < \cdots < i_m \le \omega}  a^{(j)}_{i_1}\cdots a^{(j)}_{i_m}x_{i_1} \cdots x_{i_m},$$  
$$Q = \displaystyle\sum_{1 \le i \le \omega} x_i.$$

We seek a particular type of member of the set of shared zeros of $P_j \equiv 0 \pmod{p^{\alpha_j}}, Q \equiv 0 \pmod{p^{h}}$ for $1 \leq j \leq r$.  We use the Boolean Case of Theorem \ref{thm:BIGBRINKTHM}. First note that the zero-vector is a shared zero of this polynomial system.  Note that the hypothesis of Theorem \ref{thm:BIGBRINKTHM} is satisfied, that is, we have $\sum_{j=1}^{r}(p^{\alpha_j} – 1)deg(P_j) +(p^{\sum_{j=1}^r \alpha_j}-1)deg(Q)=\sum_{j=1}^{r}(p^{\alpha_j} – 1)m +(p^h-1) < \omega$.  Thus, there exists a shared zero other than the zero-vector.  This Boolean vector of length $\omega$ must have precisely $p^h$ $1'$s in it as  $Q \equiv 0 \pmod {p^h}$ and by hypothesis $p^h> \sum_{j=1}^rm(p^{\alpha_j}-1)$ .  These $1'$s select a subsequence $S'$ of length $p^{h}$ such that $e_m(S')$ evaluates to the zero-element in the ring.  Thus, $\EGZ{p^h}{\mathbb{Z}_{p^{\alpha_1}}\oplus \ldots \oplus \mathbb{Z}_{p^{\alpha_r}}}{m} \leq \omega$.

\item By Theorem \ref{thm:EGZversusDavenport}, we have $\EGZ{t}{G}{m} \geq t+\D{}{m}{G}-m$.  By a result in \cite{CaroGirardSchmitt21} (see Theorem 3.7 of \cite{CaroGirardSchmitt21}), we know that $\D{\elem{p^s}{\bf x}}{p^s}{\pgroup{r}} = p^s\left(\sum_{i=1}^r(p^{\alpha_i}-1)+1\right)=p^s\Dav{\pgroup{r}}$.  Hence, 

\begin{eqnarray*}
\EGZ{t}{\pgroup{r}}{p^s} & \geq & t+p^s\left(\sum_{i=1}^r(p^{\alpha_i}-1)+1\right)-p^s \\
& = &t+p^s\sum_{i=1}^r(p^{\alpha_i}-1).
\end{eqnarray*}

\item Part 1 with $m=p^s$ and Part 2 together imply the result.
\end{enumerate}

\end{proof}

An immediate consequence is another Caro-Gao-type statement.

\begin{cor}\label{cor:EGZforp-group}
Let $p$ be a prime.  Let $G=\pgroup{r}$ be a $p-$group of rank $r$.  Let $h=\sum_{i=1}^r {\alpha_i}$.  Suppose that $p^h> \sum_{j=1}^rp^s(p^{\alpha_j}-1)$.  Then $\EGZ{p^h}{\pgroup{r}}{p^s} =   p^h+\D{}{p^s}{\pgroup{r}}-p^s$.
\end{cor}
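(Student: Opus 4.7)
The plan is to observe that this corollary is essentially a rebranding of Theorem~\ref{thm:EGZforp-group}.3 using the known value of the higher-degree Davenport constant of a $p$-group, so the proof amounts to a single substitution.

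First, I would invoke Theorem~\ref{thm:EGZforp-group}.3 directly. Under the stated hypothesis $p^h > \sum_{j=1}^r p^s(p^{\alpha_j}-1)$, that theorem yields
\begin{equation*}
\EGZ{p^h}{\pgroup{r}}{p^s} \;=\; p^h + p^s\sum_{j=1}^r (p^{\alpha_j}-1).
\end{equation*}

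Next, I would recall the formula for $\D{}{p^s}{\pgroup{r}}$ used in the proof of Theorem~\ref{thm:EGZforp-group}.2, namely the result from \cite{CaroGirardSchmitt21} (Theorem~3.7 of that paper), which gives
\begin{equation*}
\D{}{p^s}{\pgroup{r}} \;=\; p^s\!\left(\sum_{i=1}^r (p^{\alpha_i}-1) + 1\right) \;=\; p^s\sum_{i=1}^r (p^{\alpha_i}-1) + p^s.
\end{equation*}
Rearranging gives $\D{}{p^s}{\pgroup{r}} - p^s = p^s\sum_{i=1}^r (p^{\alpha_i}-1)$.

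Finally, I would substitute this identity into the expression from Theorem~\ref{thm:EGZforp-group}.3 to conclude
\begin{equation*}
\EGZ{p^h}{\pgroup{r}}{p^s} \;=\; p^h + \D{}{p^s}{\pgroup{r}} - p^s,
\end{equation*}
which is the desired equation. There is no real obstacle here: the content of the corollary lies entirely in Theorem~\ref{thm:EGZforp-group} and in the cited Davenport-constant computation from \cite{CaroGirardSchmitt21}; the purpose of stating it separately is to display the result in the Caro--Gao ``EGZ equals length plus Davenport minus degree'' format, thereby highlighting that the $n+\mathsf{D}-1$ phenomenon of Theorem~\ref{thm:Gao96} persists in this higher-degree setting whenever the parameters are powers of a common prime.
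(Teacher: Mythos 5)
Your proof is correct and matches the paper's own argument exactly: both invoke Theorem~\ref{thm:EGZforp-group}.3 and then substitute the formula $\D{}{p^s}{\pgroup{r}}=p^s\left(\sum_{j=1}^r(p^{\alpha_j}-1)+1\right)$ from \cite{CaroGirardSchmitt21}. Nothing further is needed.
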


\begin{proof}
It was shown in \cite{CaroGirardSchmitt21} that $\D{}{p^s}{\pgroup{r}}=p^s(\sum_{j=1}^r(p^{\alpha_j}-1)+1)$.  Thus, by Theorem \ref{thm:EGZforp-group}.3, the result follows immediately.
\end{proof}

\begin{thm}
Let $p$ be a prime such that $p > m$.  Let $G=\mathbb{Z}_{p^{\alpha_1}}\oplus \ldots \oplus \mathbb{Z}_{p^{\alpha_r}}$ be a $p-$group of rank $r$.  Let $h=\sum_{i=1}^r {\alpha_i}$ and suppose that $p^h> (\lfloor \frac{m}{2} \rfloor +1)(\sum_{i=1}^rp^{\alpha_i}-1)$.  Then $\EGZ{p^h}{\mathbb{Z}_{p^{\alpha_1}}\oplus \ldots \oplus \mathbb{Z}_{p^{\alpha_r}}}{m} \leq   p^h+(\lfloor \frac{m}{2} \rfloor +1)(\sum_{i=1}^rp^{\alpha_i}-1)$.
\end{thm}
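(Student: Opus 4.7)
The plan is to combine the polynomial-method template of Theorem \ref{thm:EGZforp-group}.1 with the Girard-Newton-via-dominating-set trick used in Theorem \ref{thm:m=3case}. The key observation is that when $p > m$, the integer $m!$ is a unit modulo every $p^{\alpha_j}$, so it suffices to force $m!\,e_m(S')$ to vanish. This in turn can be achieved by forcing only $\dom{m}=\lfloor m/2\rfloor+1$ power-sum polynomials per coordinate to vanish, each of which is \emph{linear} in the Boolean selection variables. This linearity in place of degree $m$ is precisely what turns the factor ``$m$'' appearing in Theorem \ref{thm:EGZforp-group}.1 into the factor ``$\lfloor m/2\rfloor+1$'' here.

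Concretely, I would set $\omega:=p^h+(\lfloor m/2\rfloor+1)\sum_{j=1}^r(p^{\alpha_j}-1)$, let $S=(g_1,\ldots,g_\omega)$ with $g_i=(a_i^{(1)},\ldots,a_i^{(r)})$, and fix a dominating set of exponents $D(m)=\{d_1,\ldots,d_{\dom{m}}\}\subseteq\{1,\ldots,m\}$ of minimum size $\dom{m}=\lfloor m/2\rfloor+1$. For each $1\le j\le r$ and each $d\in D(m)$ introduce the linear polynomial
$$P_{j,d}(x_1,\ldots,x_\omega):=\sum_{i=1}^\omega (a_i^{(j)})^d\,x_i,$$
together with $Q:=\sum_{i=1}^\omega x_i$, and apply the Boolean case of Theorem \ref{thm:BIGBRINKTHM} to the system $\{P_{j,d}\equiv 0\pmod{p^{\alpha_j}}\}_{j,d}\cup\{Q\equiv 0\pmod{p^h}\}$. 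Since every polynomial is linear, the degree inequality reads
$$(\lfloor m/2\rfloor+1)\sum_{j=1}^r(p^{\alpha_j}-1)+(p^h-1)<\omega,$$
which holds by the very definition of $\omega$. Hence some non-zero $0/1$-vector lies in the common zero set, and the hypothesis $\omega<2p^h$ then forces it to have exactly $p^h$ ones, so it selects a subsequence $S'$ of length $p^h$.

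To finish, I would translate these congruences back to $e_m$. By construction, the $d$-th power sum of the $j$-th coordinates of $S'$ vanishes modulo $p^{\alpha_j}$ for every $d\in D(m)$ and every $j$. Multiplying Equation \ref{eqn:elementaryinpower} through by $m!$ expresses $m!\,e_m$ as an integer-coefficient polynomial in $p_1,\ldots,p_m$ in which, by the defining property of a dominating set, every monomial contains at least one factor $p_d$ with $d\in D(m)$. Hence $m!\,e_m(S')\equiv 0\pmod{p^{\alpha_j}}$ for each $j$, and since $p>m$ the integer $m!$ is a unit modulo each $p^{\alpha_j}$, giving $e_m(S')=0$ in $G$. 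The main obstacle is the clean bookkeeping at the junction of the two techniques: one must simultaneously verify the degree count of Theorem \ref{thm:BIGBRINKTHM} with $r\cdot\dom{m}+1$ moduli and confirm that the integrality of Newton's identities (invoked already before the lemma defining $\dom{m}$) respects the coordinate-wise ring structure of $\pgroup{r}$; both are routine once the setup is in place.
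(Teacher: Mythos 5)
Your proposal is correct and follows essentially the same route as the paper: the same system of $r\cdot(\lfloor m/2\rfloor+1)$ linear power-sum polynomials indexed by a minimum dominating set of exponents, plus the counting polynomial $Q$, the same degree count in the Boolean case of Theorem \ref{thm:BIGBRINKTHM}, and the same conclusion via Girard--Newton using that $m!$ is a unit modulo $p$ when $p>m$. The paper merely makes the dominating set explicit as $\{p_1,\ldots,p_{\lfloor m/2\rfloor},p_m\}$, which is what you invoke abstractly.
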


\begin{proof}
Let $\omega = p^h+(\lfloor \frac{m}{2} \rfloor +1)(\sum_{i=1}^rp^{\alpha_i}-1)$.
Let $S=(g_1, \ldots, g_{\omega})$ be a sequence over $\mathbb{Z}_{p^{\alpha_1}}\oplus \ldots \oplus \mathbb{Z}_{p^{\alpha_r}}$, where $g_i=(a^{(1)}_i,\ldots,a^{(r)}_i)$.

For each coordinate $j=1,\ldots, r$, we define a set of $\lfloor \frac{m}{2} \rfloor +1$ linear polynomials as follows.  Let $u \in \{1,\ldots ,\lfloor \frac{m}{2} \rfloor, m\}$.  Then define

$$P_{j,u} =  \sum_{i=1}^{\omega} (a^{(j)}_i)^ux_i.$$

Define another linear polynomial $$Q = \sum_{i=1}^{\omega}x_i.$$

We seek a particular type of member of the set of shared zeros of $P_{j,u} \equiv 0 \pmod{p^{\alpha_j}}$ for $j \in \{1, \ldots , \lfloor \frac{m}{2} \rfloor, m\}$ and $Q \equiv 0 \pmod{p^{h}}$.  We use the Boolean Case of Theorem \ref{thm:BIGBRINKTHM}.

First note that the zero-vector is a shared zero of this polynomial system.  Note that the hypothesis of Theorem \ref{thm:BIGBRINKTHM} is satisfied, that is, we have 
\begin{eqnarray*}
\omega &  > & (p^h -1)deg(Q) + \sum_{j=1}^r \sum_{u \in \{1,\ldots ,\lfloor \frac{m}{2} \rfloor, m\}} (p^{\alpha_j}-1)deg(P_{j,u}) \\
 & = & p^h-1+(\lfloor \frac{m}{2} \rfloor +1)(\sum_{i=1}^rp^{\alpha_j}-1).
 \end{eqnarray*}
 
Thus, there exists a shared zero other than the zero-vector.  This boolean vector of length $\omega$ must have precisely $p^h$ $1'$s in it as $Q \equiv 0 \pmod{p^h}$ and by the hypothesis $2p^h > \omega$.
 
 We now must show that these $1'$s select a length-$p^h$ subsequence $S'$ of the above list of such that $e_m(S')$ equals the zero-element in $\pgroup{r}$. 
 
We point out that $\{p_1, \ldots , p_{\lfloor \frac{m}{2} \rfloor}, p_m\}$  is a minimum dominating set for the elementary symmetric polynomial $e_m$ (see Section 4 of \cite{CaroGirardSchmitt21}).  By the Newton-Girard formulae, $m!e_m$ may be written as a sum of products, where each product has an integer coefficient and each product contains at least one of the elements from the above dominating set.  Now as each of these polynomials in the dominating set evaluate to $0$ on $S'$, so does $m!e_m$.  The hypothesis that $p >m$, implies that $\gcd(p, m!)=1$.  This now implies that $e_m(S')$ equals the zero-element.

\end{proof}

\section{Problems and conjectures}\label{section:problems}

Recall,\\
 {\bf Problem \ref{prob:bounds?}.}
Determine a lower bound for $\EGZ{k}{k}{m}$ for $k \in \s{k}{m}$ and an upper bound for $\EGZ{k}{k}{m}$ for all $k,m$.\\

Motivated by Theorem \ref{thm:generalizedGaok=2} and computations provided to us by Benjamin Girard \cite{Girard22}, we give the following.

\begin{conj}\label{conj:Gaoconjectureqqcase}
$\EGZ{t}{q}{q} = t+q^2-q=t+\D{}{q}{\Z_q}-q.$
\end{conj}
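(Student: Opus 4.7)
The lower bound $\EGZ{t}{q}{q}\ge t+q^2-q$ is immediate from Theorem~\ref{thm:EGZversusDavenport} applied with $G=\Z_q$ and $m=q$, combined with the value $\D{}{q}{\Z_q}=q^2$ (Theorem~2.7 of~\cite{CaroGirardSchmitt21}, which also underpins Theorem~\ref{thm:EGZprimepowers}). Writing $q=p^s$, the membership $t\in\s{q}{q}$ together with Proposition~\ref{prop:Low} forces $t\ge \Low{q}{q}=q^2$, so we may assume $t\ge q^2$. The base case $t=q^2$ is already settled by Theorem~\ref{thm:EGZprimepowers}.3 with $r=2s$, giving $\EGZ{q^2}{q}{q}=2q^2-q$.

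The key structural observation is that adjoining zeros to a subsequence $S'$ leaves $e_q(S')$ unchanged, since every new $q$-fold product contains a zero and vanishes. Accordingly, my plan is a case analysis on the number $Z$ of zero entries in an arbitrary sequence $S$ over $\Z_q$ of length $\omega=t+q^2-q$. In the abundant-zero regime $Z\ge t-q^2$, delete $t-q^2$ zeros from $S$; the remainder has length $2q^2-q=\EGZ{q^2}{q}{q}$, so the base case furnishes a length-$q^2$ subsequence $S'$ with $e_q(S')\equiv 0\pmod q$, and restoring the deleted zeros yields a length-$t$ subsequence whose $e_q$ is still zero.

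The main obstacle is the complementary regime $Z<t-q^2$, where $S$ contains at least $2q^2-q+1$ non-zero entries. When $t$ is a power of $p$ (necessarily $t=p^r$ with $r\ge 2s$), the Boolean case of Theorem~\ref{thm:BIGBRINKTHM} applies directly to the polynomial system $P=e_q(g_1x_1,\dots,g_\omega x_\omega)\equiv 0\pmod{p^s}$ and $Q=\sum_i x_i\equiv 0\pmod t$, whose combined Schauz--Brink degree is $q(q-1)+(t-1)=\omega-1<\omega$, and the bound $\omega<2t$ (which follows from $t\ge q^2$) then forces $\sum_i x_i=t$ exactly; this is essentially the argument in Theorem~\ref{thm:EGZprimepowers}.3. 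For $t\in\s{q}{q}$ not a prime power, the Schauz--Brink framework, tied to a single prime $p$, cannot encode $\sum_i x_i=t$ within the remaining degree budget of $t-1$, since any single polynomial of the form $\sum_i x_i\equiv c\pmod{p^v}$ that pins $\sum_i x_i$ would require $p^v>\omega>t$, violating the budget. Two avenues I would pursue to overcome this are (i) invoking the more general polynomial method of~\cite{ClarkForrowSchmitt17}, which may permit a composite or multi-polynomial length constraint, and (ii) iterating the Davenport bound $\D{}{q}{\Z_q}=q^2$ within the non-zero part of $S$ to produce disjoint subsequences $A_1,A_2,\ldots$ with $e_q(A_j)\equiv 0\pmod q$, then splicing them via the identity $e_q(A\cup B)=\sum_{i+j=q}e_i(A)e_j(B)$, which demands simultaneous control over the lower elementary symmetric polynomials $e_i$ for $0<i<q$ on each piece. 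The splicing step --- producing a subsequence of length exactly $t$ while exploiting the structural condition $t\in\s{q}{q}$ (i.e., a Kummer-type base-$p$ digit pattern in $t$) --- is in my view the principal difficulty.
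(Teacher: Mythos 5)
The statement you are proving is Conjecture~\ref{conj:Gaoconjectureqqcase}; the paper gives no proof of it, only supporting evidence (the case $q=2$ via Theorem~\ref{thm:generalizedGaok=2}, the case $t=p^r$ via Theorem~\ref{thm:EGZprimepowers}, and computations of Girard), so your attempt must be judged as an attack on an open problem. The pieces you do establish are correct. The lower bound $\EGZ{t}{q}{q}\ge t+q^2-q$ is exactly Theorem~\ref{thm:EGZprimepowers}.2 (via Theorem~\ref{thm:EGZversusDavenport} and $\D{}{q}{\Z_q}=q^2$), and $t\in\s{q}{q}$ does force $t\ge\Low{q}{q}=q^2$. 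Your zero-padding reduction is sound: appending zeros to $S'$ leaves $e_q(S')$ unchanged, so a sequence of length $t+q^2-q$ containing at least $t-q^2$ zeros reduces to the settled base case $\EGZ{q^2}{q}{q}=2q^2-q$. When $t$ is itself a power of $p$, the Schauz--Brink argument you sketch is precisely the paper's proof of Theorem~\ref{thm:EGZprimepowers}, so nothing new is needed there either.

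The genuine gap is the one you name yourself: sequences with fewer than $t-q^2$ zeros when $t$ is not a power of $p$, and neither proposed avenue closes it. Theorem~\ref{thm:BIGBRINKTHM} (and its generalization in \cite{ClarkForrowSchmitt17}) works one prime at a time: a congruence $\sum_i x_i\equiv 0\pmod{p^v}$ admits the zero vector only if $p^v\mid t$ is not required, but it pins the nonzero solution to length exactly $t$ only when $p^v=t$, i.e., only when $t$ is a $p$-power; choosing $p^v>\omega$ to force the value $t$ exactly costs $p^v-1>\omega-1$ and destroys the degree budget, and splitting $t$ across two primes is not supported by the hypothesis of the theorem. The splicing avenue founders where you say it does: $e_q(A\cup B)=\sum_{i+j=q}e_i(A)e_j(B)$ requires simultaneous control of $e_i$ for all $0<i<q$ on the pieces, whereas the only input available, $\D{}{q}{\Z_q}=q^2$, controls $e_q$ alone. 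So your proposal proves the conjecture only in the regimes the paper already covers plus the abundant-zero regime; the main case remains open, consistent with the paper's decision to state this as a conjecture rather than a theorem. (A useful sanity check for any future attack: for $q=2$ the conjecture is already a theorem, since Theorem~\ref{thm:generalizedGaok=2} with $m=2$ gives $\EGZ{t}{2}{2}=t+2$, and its proof runs through a base-$p$ digit analysis via Kummer's theorem that any general argument would likely need to reproduce.)
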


It is certainly the case that something more nuanced is true in the case that $k$ and $m$ are not both equal to a prime power $q$.  This is evidence by the following computations provided to us by Benjamin Girard \cite{Girard22}:

$$\EGZ{9}{9}{2}=17 >9+\D{}{2}{\Z_9}-2=9+9-2=16,$$ and

$$\EGZ{10}{6}{6}=19 > 10+\D{}{6}{\Z_6}-6=10+13-6=17.$$



\begin{prob}
Determine an upper bound for the $m^{th}$-degree Erd\H{o}s-Ginzburg-Ziv constant for general abelian groups.
\end{prob}

{\bf Acknowledgments:}  We give thanks to BIRS-CMO 2019 and Casa Matem\'atica Oaxaca, Mexico for supporting and hosting the event Zero-Sum Ramsey Theory: Graphs, Sequences and More 19w5132.  We are grateful for helpful comments from Qinghai Zhao.

\end{document}